\def\@setcopyright{}
\newcommand{\Y}{\operatorname{\mathcal Y}\nolimits}
\newcommand{\C}{\gamma}
\newtheorem{lemma}{Lemma}[section]
\newtheorem{proposition}[lemma]{Proposition}
\newtheorem{theorem}[lemma]{Theorem}
\newtheorem{remark}[lemma]{Remark}
\newtheorem{conjecture}[lemma]{Conjecture}
\newtheorem*{acknowledgments*}{ACKNOWLEDGMENTS}
\newtheorem{definition}[lemma]{Definition}
\newtheorem*{conj*}{Conjecture}
\newtheorem*{thm*}{Main Theorem}
\newtheorem*{remark*}{Remark}
\newtheorem*{lemma*}{Lemma}
\newtheorem*{example*}{[Example]}
\begin{document}
\begin{center}
{\Large \bf A topological interpretation about $m_{G, N}$ for finite group $G$ with normal subgroup $N$
}\\

\bigskip
\end{center}

\begin{center}
Heguo Liu$^{1}$, Xingzhong Xu$^{1, 2}$
, Jiping Zhang$^{3}$

\end{center}

\footnotetext {$^{*}$~~Date: 05/03/2018.
}
\footnotetext {
1. Department of Mathematics, Hubei University, Wuhan, 430062, China

2. Departament de Matem$\mathrm{\grave{a}}$tiques, Universitat Aut$\mathrm{\grave{o}}$noma de Barcelona, E-08193 Bellaterra,
Spain

3. School of Mathematical Sciences, Peking University, Beijing, 100871, China

Heguo Liu's E-mail: ghliu@hubu.edu.cn

Xingzhong Xu's E-mail: xuxingzhong407@mat.uab.cat; xuxingzhong407@126.com

Jiping Zhang's E-mail: jzhang@pku.edu.cn

Supported by National 973 Project (2011CB808003) and NSFC grant (11371124, 11501183).
}

\title{}
\def\abstractname{\textbf{Abstract}}

\begin{abstract}\addcontentsline{toc}{section}{\bf{English Abstract}}
Let $G$ be a finite group and $N\unlhd G$.
In this note, we construct a class poset of $G$ for some cyclic subgroup $C$ of $G$.
And we find a relation between $m_{G,N}$ and the Euler characteristic of some nerve spaces of these posets.
(see Main Theorem).
\hfil\break

\textbf{Key Words:} $B$-group; nerve space; poset of group.
\hfil\break \textbf{2000 Mathematics Subject
Classification:} \ 18B99 $\cdot$ \ 19A22 $\cdot$ \ 20J15

\end{abstract}

\maketitle

\section{\bf Introduction}

In \cite{Bo1}, Bouc proposed the following conjecture:
\begin{conjecture}\cite[Conjecture A]{Bo1} Let $G$ be a finite group. Then $\beta(G)$ is nilpotent if
and only if $G$ is nilpotent.
\end{conjecture}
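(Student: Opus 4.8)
The statement splits into two implications of very unequal difficulty. By construction $\beta(G)$ is reached from $G$ by a finite chain of deflations, hence is a quotient of $G$; since quotients of nilpotent groups are nilpotent, $G$ nilpotent forces $\beta(G)$ nilpotent, and this settles one direction. The entire content is the converse, which I would attack in its contrapositive form: \emph{if $G$ is not nilpotent, then $\beta(G)$ is not nilpotent.} I would argue by induction on $|G|$. If $G$ is already a $B$-group then $\beta(G)=G$ is non-nilpotent and there is nothing to prove. Otherwise there is a normal subgroup $N\unlhd G$ with $N\neq 1$ and $m_{G,N}\neq 0$, and the defining property of the assignment $\beta$ gives $\beta(G)=\beta(G/N)$. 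As $|G/N|<|G|$, the induction hypothesis finishes the proof as soon as we know that $G/N$ is again non-nilpotent.

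Thus everything reduces to excluding a single bad deflation step, i.e.\ to the following Key Lemma: \emph{if $N\unlhd G$ with $N\neq 1$ and $G/N$ is nilpotent while $G$ is not, then $m_{G,N}=0$.} Equivalently, a non-nilpotent group admits no deflation onto a nilpotent quotient with nonzero multiplicity; phrased through the nilpotent residual $\gamma_\infty(G)$, no single nonzero deflation can absorb all of $\gamma_\infty(G)$, so that $\gamma_\infty(G)$ survives into $\beta(G)$. Direct computation confirms the lemma in the first interesting cases (for example $m_{S_3,A_3}=0$ and $m_{A_4,V_4}=0$, and indeed $A_4$ turns out to be a non-nilpotent $B$-group), which is encouraging.

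To prove the Key Lemma I would use the topological reading of $m_{G,N}$ that is the subject of this note. Starting from
\[ m_{G,N}=\frac{1}{|G|}\sum_{\substack{X\le G\\ XN=G}}|X|\,\mu(X,G), \]
where the sum runs over the supplements $X$ of $N$ in $G$, I write $|X|$ as the number of its elements and group each element $g$ by the cyclic subgroup $C=\langle g\rangle$ it generates, so that $C$ contributes with multiplicity $\varphi(|C|)$; this gives $m_{G,N}=\frac{1}{|G|}\sum_{C}\varphi(|C|)\sum_{C\le X,\ XN=G}\mu(X,G)$. Each inner sum equals, up to sign, the reduced Euler characteristic of the nerve of the class poset $\{X:\ C\le X<G,\ XN=G\}$ attached to $C$; this identification of $m_{G,N}$ with Euler characteristics of these nerve spaces is precisely the Main Theorem of the note. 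In this language $m_{G,N}$ is a $G$-weighted (equivariant) reduced Euler characteristic of the supplement complex of $N$, and the plan is to show it vanishes whenever $G/N$ is nilpotent. The structural input I would exploit is that a nilpotent $G/N$ is the direct product of its Sylow subgroups, whose preimages in $G$ furnish canonical supplements of $N$; the aim is to glue these into a $G$-equivariant contraction (or an acyclic closure operator in Quillen's sense) of the supplement complex.

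I expect this contraction to be the main obstacle. The $S_3$ computation already shows that the vanishing is not pointwise: the bare supplement complex for $N=A_3$ is three discrete points, with nonzero reduced Euler characteristic, and the cancellation appears only after the $\varphi(|C|)$-weights are taken into account (the contributions of $C=1$ and $C=A_3$ cancel). The contraction must therefore be genuinely equivariant and must respect the weights coming from the different cyclic subgroups, rather than collapse the underlying poset alone; building such a weight-compatible homotopy is the delicate point. A secondary, group-theoretic difficulty is to handle uniformly all $N$ with $G/N$ nilpotent, including those with $N$ itself non-nilpotent and those where several primes divide $\gamma_\infty(G)$. For that I would first reduce to the minimal non-nilpotent (Schmidt) sections witnessing the non-nilpotency of $G$, establish the equivariant acyclicity on these building blocks by direct analysis, and then propagate it along the normal structure of $G/N$.
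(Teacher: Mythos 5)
The statement you set out to prove is Bouc's Conjecture A, which this paper does \emph{not} prove: it is quoted as a conjecture, known (by Bouc) for solvable $G$ and in some non-solvable cases, and the paper's actual contribution --- the Main Theorem --- is only a formula expressing $m_{G,N}$ through reduced Euler characteristics, offered as a tool toward the conjecture. So your proposal has to be judged as a claimed solution of an open problem, and it is not one. Your easy direction and your induction are correct (if $G$ is not a $B$-group, pick $N\neq 1$ with $m_{G,N}\neq 0$; then $\beta(G)\cong\beta(G/N)$ by Theorem 2.6), but the ``Key Lemma'' you reduce to ($N\neq 1$, $G/N$ nilpotent, $G$ non-nilpotent $\Rightarrow m_{G,N}=0$) is not an auxiliary statement: it is \emph{equivalent} to the conjecture. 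It follows from the conjecture (if $m_{G,N}\neq 0$ then $\beta(G)\cong\beta(G/N)$ is a quotient of the nilpotent group $G/N$, hence nilpotent, forcing $G$ nilpotent), and conversely implies it by your induction. The reduction therefore repackages the problem without gaining anything; all of the content is deferred to the Key Lemma.

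For the Key Lemma you offer only a program, and its central step is missing and, as formulated, cannot work. You propose a $G$-equivariant contraction (or acyclic closure operator) of the supplement complex, but your own $S_3$ computation shows the posets $\{X : C\leq X\lneq G,\ XN=G\}$ are typically \emph{not} contractible (for $C=1$, $G=S_3$, $N=A_3$ the nerve is three points, $\widetilde{\chi}=2$): the vanishing of $m_{G,N}$ is an arithmetic cancellation \emph{across} different cyclic subgroups $C$, weighted by $\varphi(|C|)$, so no statement about the topology of each poset separately --- equivariant or not --- can yield it, and ``weight-compatible homotopy'' is not a defined notion you can invoke; the concluding reduction to Schmidt sections is likewise only a hope. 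What \emph{is} correct, and worth keeping, is the identity
$$\sum_{\substack{C\leq X\leq G\\ XN=G}}\mu(X,G)=-\widetilde{\chi}\bigl(|N(\{X : C\leq X\lneq G,\ XN=G\})|\bigr),$$
which follows from Hall's chain-counting description of $\mu$ because every subgroup containing some $X$ with $XN=G$ again has this property. This is in fact a cleaner, one-step form of the paper's Main Theorem, which reaches an equivalent formula indirectly, via inclusion--exclusion over the maximal subgroups $H_i\supseteq N$ and the posets $\mathfrak{T}_C(G,H_\sigma)$ (with the condition $X\nleq H_\sigma$ in place of $XN=G$). But such a formula is exactly where the paper stops as well: it translates the conjecture into a statement about weighted Euler characteristics; it does not prove it, and neither does your proposal.
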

Here, $\beta(G)$ denotes a largest quotient of a finite group which is a $B$-group and the definition of $B$-group
can be found in \cite{Bo1, Bo2} or in Section 2.
Bouc has proven Conjecture 1.1  under the additional assumption that the finite group $G$ is solvable in \cite{Bo1}.
In \cite{XZ}, Xu and Zhang consider some special cases when the finite group $G$ is not solvable. But this result relies on
the proposition of Baumann \cite{Ba}, and his proposition relies on the Conlon theorem \cite[(80.51)]{CR}.
If we want to generalize the result of \cite{XZ}, we need to use the new method to compute $m_{G, N}$ directly. Here, $N$
is a normal subgroup of $G$. And the definition of $m_{G,N}$ can be found in \cite{Bo1,Bo2} or in Section 2.

For the computation of $m_{G,N}$, Bouc had computed the following first.
\begin{proposition}\cite[Proposition 5.6.1]{Bo2} Let $G$ be a finite group. Then
$$m_{G,G}=\left\{ \begin{array}{ll}
0, &\mbox{if}~ G ~is~not~cyclic;
\\[2ex] \frac{1}{|G|}\varphi(|G|), &\mbox{if} ~  G ~is ~cyclic.\end{array}\right.$$
where $\varphi$ is the Euler totient function.
\end{proposition}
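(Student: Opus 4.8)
The plan is to start from the definition
$m_{G,N} = \frac{1}{|G|}\sum_{XN=G}|X|\,\mu(X,G)$,
where the sum runs over subgroups $X \le G$ satisfying $XN = G$ and $\mu$ denotes the M\"obius function of the poset of all subgroups of $G$. Taking $N = G$ makes the constraint $XN = G$ hold for every subgroup $X$, so the first reduction is simply
$$m_{G,G} = \frac{1}{|G|}\sum_{X \le G}|X|\,\mu(X,G).$$
Thus everything reduces to evaluating the integer $S := \sum_{X\le G}|X|\,\mu(X,G)$.

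The key step is an interchange of summation. I would write $|X| = \sum_{g\in X} 1$ and regroup the double sum according to the element $g$ rather than the subgroup $X$:
$$S = \sum_{X \le G}\Big(\sum_{g \in X} 1\Big)\mu(X,G) = \sum_{g\in G}\ \sum_{\langle g\rangle \le X \le G}\mu(X,G).$$
Here the inner sum ranges over all subgroups $X$ lying between the cyclic subgroup $\langle g\rangle$ generated by $g$ and $G$ itself, since $g\in X$ is equivalent to $\langle g\rangle \le X$.

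Now I invoke the defining recursion of the M\"obius function on the subgroup lattice, namely $\sum_{Y \le X \le G}\mu(X,G) = \delta_{Y,G}$. Applying it with $Y = \langle g\rangle$ collapses each inner sum to $1$ when $\langle g\rangle = G$ and to $0$ otherwise, so
$$S = \#\{g\in G : \langle g\rangle = G\},$$
the number of generators of $G$. If $G$ is not cyclic, no single element generates $G$, hence $S=0$ and $m_{G,G}=0$. If $G$ is cyclic, the number of its generators is $\varphi(|G|)$, giving $m_{G,G}=\frac{1}{|G|}\varphi(|G|)$, as claimed.

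The computation is short once the right manipulation is spotted, so there is no serious analytic obstacle. The one point that requires care is the bookkeeping around the M\"obius function: I must use the convention $\mu(X,G)$ (running from the smaller subgroup up to $G$) together with the matching recursion $\sum_{Y\le X\le G}\mu(X,G)=\delta_{Y,G}$, and I should confirm that the definition of $m_{G,N}$ indeed carries the factor $|X|$ (not $[G:X]$), so that the rewriting $|X|=\sum_{g\in X}1$ is the correct entry point. Everything else --- the vacuousness of $XN=G$ when $N=G$ and the identification of the surviving terms with generators of $G$ --- is then immediate.
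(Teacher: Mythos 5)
Your proof is correct. Note that the paper does not prove this proposition at all: it is quoted from Bouc's book \cite[Proposition 5.6.1]{Bo2} (stated as Proposition 1.2 and restated as Proposition 2.8), so there is no in-paper argument to compare with, and your proof stands as a self-contained replacement for the citation. The route you take is the standard one: with $N=G$ the constraint $XG=G$ is vacuous, so $|G|\,m_{G,G}=\sum_{X\le G}|X|\,\mu(X,G)$; writing $|X|=\sum_{g\in X}1$ and interchanging the order of summation leaves the inner sums $\sum_{\langle g\rangle\le X\le G}\mu(X,G)$, which the recursion $\sum_{Y\le X\le G}\mu(X,G)=\delta_{Y,G}$ collapses to the indicator of $\langle g\rangle=G$; hence $|G|\,m_{G,G}$ counts the generators of $G$, giving $0$ in the noncyclic case and $\varphi(|G|)$ in the cyclic case. (This is Hall's classical M\"obius-inversion count of generating tuples, specialized to $1$-tuples.) Both conventions you flagged do check out against the paper: Definition 2.1 indeed carries the factor $|X|$ rather than $[G:X]$, and since Section 3 defines $\mu$ as the inverse of the zeta matrix, the recursion you invoke is precisely the identity $AA^{-1}=E$ read off at the entry $(\langle g\rangle,G)$, i.e. $\sum_{X}\zeta(\langle g\rangle,X)\,\mu(X,G)=\delta_{\langle g\rangle,G}$.
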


%And in the theory of groups, we have the following
%\begin{proposition} Let $G$ be a finite group. Then
%$$1= \frac{1}{|G|}\sum_{\substack{C\leq G\\  C~ \mathrm{is~ cyclic}}}\varphi(|C|).$$
%where $\varphi$ is the Euler totient function.
%\end{proposition}

For general case, our main theorem can picture $m_{G,N}$ as follows.
\begin{thm*}Let $G$ be a finite group, $G$ not cyclic and $N\unlhd G$.
Then
$$ m_{G,N}=\frac{1}{|G|}\sum_{\substack{C\leq G\\  C~ \mathrm{is~ cyclic}}}\sum_{i=1}^n
\sum_{\substack{\sigma\leq J\\ |\sigma|=i\\  C\leq H_\sigma}}
(-1)^i\widetilde{\chi}(|N(\mathfrak{T}_{C}(G, H_\sigma))|)\cdot \varphi(|C|).$$
Here, we explain the symbols of the above formula as follows:

(1) Let $\{H_1, H_2,\cdots, H_n\}$ be the set of all maximal subgroup
of $G$ such that $N\lneq H_i$;

(2) Set $J=\{1,2,\ldots, n\}$ and $\sigma$ be a non-empty subset of $J$. And $|\sigma|$ means
the order of set $\sigma$;

(3) Set $H_{\sigma}=\bigcap_{i\in\sigma}H_i$;

(4) Let $|N(\mathfrak{T}_{C}(G,H_\sigma))|$ be a simplicial complex associated to the poset $\mathfrak{T}_{C}(G,H_\sigma)$, and
$\widetilde{\chi}(|N(\mathfrak{T}_{C}(G,H_\sigma))|)$ be the reduced Euler characteristic of the space $|N(\mathfrak{T}_{C}(G, H_\sigma))|$.
\end{thm*}
Here, $\mathfrak{T}_{C}(G,H)$ is defined as follows:
 Let $C$ be a cyclic subgroup of $H$, define
$$\mathfrak{T}_{C}(G, H):=\{X|C\leq X\lneq G, X\nleq H\}.$$
We can see that $\mathfrak{T}_{C}(G, H)$ is a poset ordered by inclusion. We can consider poset $\mathfrak{T}_{C}(G, H)$
as a category with one morphism $A\rightarrow B$ if $A$ is a subgroup of $B$. We set $N(\mathfrak{T}_{C}(G, H))$
is the nerve of the category $\mathfrak{T}_{C}(G, H)$ and $|N(\mathfrak{T}_{C}(G, H))|$ is  the geometric realization
of $N(\mathfrak{T}_{C}(G, H))$.

After recalling the basic definitions and properties of $B$-groups in Section 2, we introduce a
lemma about M$\mathrm{\ddot{o}}$bius function in Section 3. And this lemma will be used
 in Section 4 to prove  Proposition 4.2.
In Section 5, we construct a class poset
$\mathfrak{T}_{C}(G, H)$ of $G$ for some cyclic subgroup $C$ and prove Main Theorem in Section 6.

\section{\bf The Burnside rings and $B$-groups}

In this section we collect some known results about the Burnside rings and $B$-groups.  For the background theory of Burnside rings and
$B$-groups, we refer to \cite{Bo1}, \cite{Bo2}.

\begin{definition}\cite[Notation 5.2.2]{Bo2} Let G be a finite group and $N\unlhd G$. Denote by $m_{G,N}$ the rational number defined by:
$$m_{G,N}=\frac{1}{|G|}\sum_{XN=G} |X|\mu(X, G),$$
where $\mu$ is the $M\ddot{o}bius$ function of the poset of subgroups of $G$.
\end{definition}

\begin{remark} If $N=1$, we have
$$m_{G,1}=\frac{1}{|G|}\sum_{X1=G} |X|\mu(X, G)=\frac{1}{|G|}|G|\mu(G, G)=1\neq 0.$$
\end{remark}

\begin{definition}\cite[Definition 2.2]{Bo1} The finite group $G$ is called a $B$-group if
$m_{G,N}=0$ for any non-trivial normal subgroup $N$ of $G$.
\end{definition}

\begin{proposition}\cite[Proposition 5.4.10]{Bo2} Let $G$ be a finite group. If $N_{1}, N_{2}\unlhd G$
are maximal such that $m_{G,N}\neq 0$, then $G/N_{1}\cong G/N_{2}$.
\end{proposition}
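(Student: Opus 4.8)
The plan is to interpret $m_{G,N}$ inside the rational Burnside ring $\mathbb{Q}B(G)$ and then exploit deflation maps. Write $\overline{G}=G/N$ and let
$$\epsilon_G=e_G^G=\frac{1}{|G|}\sum_{H\le G}|H|\mu(H,G)[G/H]$$
be Gluck's formula for the primitive idempotent of $\mathbb{Q}B(G)$ indexed by $G$ itself; it is the unique idempotent whose mark $\phi_H$ equals $1$ at $H=G$ and $0$ at every proper subgroup. Since the deflation map satisfies $\mathrm{Def}^G_{\overline G}[G/H]=[\,\overline G/(HN/N)\,]$, comparison with Definition 2.1 shows that $m_{G,N}$ is exactly the multiplicity of the trivial orbit in $\mathrm{Def}^G_{\overline G}\epsilon_G$, that is $m_{G,N}=\phi_{\overline G}\big(\mathrm{Def}^G_{\overline G}\epsilon_G\big)$, because the term $[G/H]$ deflates to the trivial orbit precisely when $HN=G$.

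The first genuine step is the clean identity
\begin{equation*}
\mathrm{Def}^G_{G/N}\epsilon_G=m_{G,N}\,\epsilon_{G/N}.\tag{$\dagger$}
\end{equation*}
I would prove $(\dagger)$ using the projection formula $\mathrm{Def}^G_{\overline G}\big(x\cdot\mathrm{Inf}^G_{\overline G}y\big)=\big(\mathrm{Def}^G_{\overline G}x\big)\cdot y$ for the Burnside biset functor, together with the relation $\epsilon_G\cdot[G/K]=\delta_{K,G}\,\epsilon_G$, which is immediate from the mark pattern of $\epsilon_G$. These give $\epsilon_G\cdot\mathrm{Inf}^G_{\overline G}y=\phi_{\overline G}(y)\,\epsilon_G$, hence, setting $w=\mathrm{Def}^G_{\overline G}\epsilon_G$, that $w\cdot y=\phi_{\overline G}(y)\,w$ for all $y\in\mathbb{Q}B(\overline G)$. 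Reading this off on marks yields $\phi_{\overline L}(w)\,\phi_{\overline L}(y)=\phi_{\overline G}(y)\,\phi_{\overline L}(w)$ for all $\overline L$ and $y$; since distinct subgroups are separated by marks, $\phi_{\overline L}(w)=0$ unless $\overline L=\overline G$, so $w$ is a multiple of $\epsilon_{\overline G}$, the multiple being $\phi_{\overline G}(w)=m_{G,N}$. Feeding $(\dagger)$ into the transitivity of deflation $\mathrm{Def}^{G/N}_{G/M}\circ\mathrm{Def}^G_{G/N}=\mathrm{Def}^G_{G/M}$ (for $N\le M\unlhd G$) and comparing coefficients of $\epsilon_{G/M}$ gives the multiplicativity relation
\begin{equation*}
m_{G,M}=m_{G,N}\cdot m_{G/N,\,M/N}.\tag{$\ddagger$}
\end{equation*}
In particular, if $N$ is maximal among normal subgroups with $m_{G,N}\ne0$, then for each nontrivial $M/N\unlhd G/N$ one has $m_{G,M}=0$, so $(\ddagger)$ forces $m_{G/N,\,M/N}=0$; thus $G/N$ is a $B$-group in the sense of Definition 2.3.

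It remains to prove that the two $B$-groups $G/N_1$ and $G/N_2$ coming from maximal choices $N_1,N_2$ are isomorphic, and this is where the real difficulty lies: the maximal $N$ is genuinely non-unique (for $G=C_2\times C_2\times C_2$ every subgroup of order $2$ is such an $N$, each with $m_{G,N}=-1$ and quotient $C_2\times C_2$), so only the isomorphism type can be an invariant. I would first normalize by replacing $G$ with $G/(N_1\cap N_2)$, checking via $(\ddagger)$ that the maximality and nonvanishing hypotheses on the images of $N_1,N_2$ survive, so that one may assume $N_1\cap N_2=1$. The heart of the matter is then the well-definedness of the largest $B$-group quotient $\beta(G)$: one must show that the isomorphism class of a $B$-group quotient $G/N$ with $m_{G,N}\ne0$ and $N$ maximal depends only on $G$. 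I expect to obtain this through Bouc's correspondence between $B$-groups and the simple factors of the biset functor $\mathbb{Q}B(-)$: identity $(\dagger)$ exhibits every such $G/N$ as realizing the same simple composition factor cut out by $\epsilon_G$, whose minimal group is a single $B$-group $\beta(G)$, whence $G/N_1\cong\beta(G)\cong G/N_2$. Isolating and proving this rigidity, rather than the comparatively formal relation $(\ddagger)$, is the principal obstacle.
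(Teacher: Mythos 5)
The paper itself offers no proof of this statement: it is quoted as background from Bouc's book [Bo2, Proposition 5.4.10], so your attempt can only be measured against Bouc's argument. Your formal layer is correct and is exactly Bouc's machinery: identity $(\dagger)$, $\mathrm{Def}^G_{G/N}e^G_G=m_{G,N}\,e^{G/N}_{G/N}$, is Bouc's theorem on deflation of the top idempotent (your proof via the projection formula is sound; its validity rests on Dedekind's law $HN\cap K=(H\cap K)N$ for $N\leq K\leq G$, plus separation by marks), $(\ddagger)$ follows by transitivity of deflation, maximality then makes $G/N_i$ a $B$-group, and your $C_2\times C_2\times C_2$ example is computed correctly. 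The genuine gap is exactly where you flag it: the isomorphism $G/N_1\cong G/N_2$ is not reduced by $(\dagger)$ and $(\ddagger)$ to anything smaller --- it is the entire content of the proposition --- and your plan for it is to invoke the correspondence between $B$-groups and the simple composition factors (equivalently, the subfunctor lattice) of $\mathbb{Q}B$. That appeal is circular as stated: in Bouc's development this correspondence is established \emph{after}, and \emph{by means of}, the well-definedness of $\beta(G)$, which is precisely Proposition 5.4.10. So the proposal amounts to a correct assembly of preliminaries plus an IOU for the theorem itself, as you concede by calling the rigidity ``the principal obstacle.''

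The missing step can be closed non-circularly with tools you already have in hand. First, if $m_{G,N}\neq 0$ then $e^G_G$ and $e^{G/N}_{G/N}$ generate the \emph{same} subfunctor of $\mathbb{Q}B$: one inclusion is $(\dagger)$; the other follows from $e^G_G\cdot\mathrm{Inf}^G_{G/N}e^{G/N}_{G/N}=e^G_G$ (compare marks), since multiplication by $[G/K]$ is the biset operation $\mathrm{Ind}^G_K\mathrm{Res}^G_K$ and hence stays inside the subfunctor generated by $e^{G/N}_{G/N}$. Second, for a $B$-group $H$ the idempotent $e^H_H$ is annihilated by restriction to every proper subgroup (its marks vanish there) and by deflation modulo every nontrivial normal subgroup (by $(\dagger)$ and the $B$-group property); hence, by the $\mathrm{Ind}$-$\mathrm{Inf}$-$\mathrm{Iso}$-$\mathrm{Def}$-$\mathrm{Res}$ factorization of transitive bisets, any biset operation either kills $e^H_H$ or factors through an isomorphism of $H$ onto a subquotient of the target group. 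Applying this to $H_1=G/N_1$ and $H_2=G/N_2$, which by the first point generate the same subfunctor, each $H_i$ is isomorphic to a subquotient of the other, forcing $|H_1|=|H_2|$ and then $G/N_1\cong G/N_2$; this is essentially Bouc's own route. (A minor remark: your normalization to $N_1\cap N_2=1$ is legitimate --- nonvanishing and maximality do descend to $G/(N_1\cap N_2)$ by $(\ddagger)$ --- but it is never used afterwards and buys nothing.)
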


\begin{definition}\cite[Notation 2.3]{Bo1} When $G$ is a finite group, and $N\unlhd G$
is maximal such that $m_{G,N}\neq 0$, set $\beta(G)=G/N.$
\end{definition}

\begin{theorem}\cite[Theorem 5.4.11]{Bo2} Let $G$ be a finite group.

1. $\beta(G)$ is a $B$-group.

2. If a $B$-group $H$ is isomorphic to a quotient of $G$, then $H$ is isomorphic to a quotient of $\beta(G)$.

3. Let $M\unlhd G$. The following conditions are equivalent:

\quad\quad (a) $m_{G,N}\neq 0$.

\quad\quad (b) The group $\beta(G)$ is isomorphic to a quotient of $G/M$.

\quad\quad (c) $\beta(G)\cong \beta(G/N)$.
\end{theorem}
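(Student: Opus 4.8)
The plan is to translate the rational number $m_{G,N}$ into the language of the primitive idempotents of the rational Burnside ring and then read off all three assertions from the behaviour of these idempotents under deflation. Recall that $\mathbb{Q}B(G)$ has a complete set of orthogonal primitive idempotents $e_H^G$, indexed by conjugacy classes of subgroups, with $e_H^G=\frac{1}{|N_G(H)|}\sum_{K\le H}|K|\mu(K,H)[G/K]$. Applying the deflation map $\operatorname{Defl}_{G/N}^G\colon\mathbb{Q}B(G)\to\mathbb{Q}B(G/N)$, which sends $[G/K]$ to $[(G/N)/(KN/N)]$, to $e_G^G=\frac{1}{|G|}\sum_{K\le G}|K|\mu(K,G)[G/K]$, one checks directly that the coefficient of the trivial $(G/N)$-set in $\operatorname{Defl}_{G/N}^G(e_G^G)$ is $\frac{1}{|G|}\sum_{KN=G}|K|\mu(K,G)=m_{G,N}$. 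Since that coefficient coincides with the coefficient of the top idempotent $e_{G/N}^{G/N}$, I obtain the reformulation: $m_{G,N}\neq 0$ if and only if $e_{G/N}^{G/N}$ occurs in $\operatorname{Defl}_{G/N}^G(e_G^G)$.

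The engine of the argument is a recursion obtained from the transitivity of deflation, $\operatorname{Defl}_{G/K}^G=\operatorname{Defl}_{(G/N)/(K/N)}^{G/N}\circ\operatorname{Defl}_{G/N}^G$ for $N\le K\unlhd G$, together with the fact that inflation is a unital ring homomorphism with $\operatorname{Inf}_{G/N}^G(e_{H/N}^{G/N})=\sum_{KN=_G H}e_K^G$. Writing $\operatorname{Defl}_{G/N}^G(e_G^G)=\sum_{N\le M\le G}\lambda_M\,e_{M/N}^{G/N}$, so that $\lambda_G=m_{G,N}$, and pushing down to $G/K$, the top-idempotent coefficient collects only those $M$ with $MK=G$. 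Using the Dedekind law to rewrite the surviving inner sums as lower $m$'s, this yields a relation of the shape
$$ m_{G,K}=m_{G,N}\,m_{G/N,K/N}+\sum_{\substack{N\le M<G\\ MK=G}}\lambda_M\,c_M\,m_{M/N,(M\cap K)/N}, $$
where the leading term is the product of the two lower constants and the remaining sum is a correction indexed by proper sections $M/N$ of $G/N$. I would prove all three statements simultaneously by induction on $|G|$, feeding the inductive hypothesis into the strictly smaller sections $M/N$ that appear in the correction.

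With the recursion in hand the assertions follow. For part (1), take $N$ maximal with $m_{G,N}\neq 0$ and any normal $K$ with $N<K$; then $m_{G,K}=0$ by maximality, and the recursion forces $m_{G/N,K/N}=0$, so $\beta(G)=G/N$ carries no non-trivial normal subgroup with non-zero $m$, i.e.\ it is a $B$-group. For part (3), the leading term shows that $m_{G,N}\neq 0$ propagates maximality from $G/N$ back to $G$, which gives the equivalence of $m_{G,N}\neq 0$ with $\beta(G)\cong\beta(G/N)$, while the uniqueness statement in Proposition 2.4 converts this into the assertion that $\beta(G)$ is a quotient of $G/N$. Part (2) is then the universal property: if a $B$-group $H\cong G/K$ is a quotient of $G$, then $\beta(G/K)\cong H$, and (3) identifies $\beta(G)$ with $\beta(G/K)$ once $m_{G,K}\neq 0$, exhibiting $H$ as a quotient of $\beta(G)$; the general case is reached by applying the same reasoning to a suitable normal subgroup containing $N$.

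The hard part will be controlling the correction sum. The leading product term is immediate, but the terms indexed by proper $M<G$ with $MK=G$ do not obviously vanish, and no formal cancellation is available: inflation and deflation are \emph{not} adjoint with respect to the idempotent-diagonalising form, for otherwise $m_{G,N}$ would never vanish, which is false. The crux is therefore a genuine Burnside-ring computation of the coefficients $\lambda_M$ and the rational constants $c_M$, most cleanly organised through the idempotent operator $\operatorname{Inf}_{G/N}^G\operatorname{Defl}_{G/N}^G$ acting by $[G/X]\mapsto[G/XN]$, showing that, modulo the inductive hypothesis on the smaller sections $M/N$, the correction cannot cancel the leading term. This is the single place where the full deflation calculus, rather than formal properties alone, is required.
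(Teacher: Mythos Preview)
The paper does not prove this theorem: it is quoted from \cite[Theorem~5.4.11]{Bo2} as background, with no argument given. There is therefore no proof in the paper to compare your proposal against.

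That said, your outline is close in spirit to Bouc's actual argument but misses the single lemma that makes everything work. You expand $\operatorname{Defl}_{G/N}^G(e_G^G)=\sum_{N\le M\le G}\lambda_M\,e_{M/N}^{G/N}$ and devote the rest of the sketch to the ``correction sum'' over proper $M<G$, calling it ``the hard part''. In fact every such $\lambda_M$ vanishes: one has exactly
\[
\operatorname{Defl}_{G/N}^G(e_G^G)=m_{G,N}\,e_{G/N}^{G/N}.
\]
This is a two-line consequence of the projection formula $\operatorname{Defl}(x\cdot\operatorname{Inf}(y))=\operatorname{Defl}(x)\cdot y$ combined with the identity $\operatorname{Inf}(e_{L/N}^{G/N})=\sum_{KN=_G L}e_K^G$ that you yourself state: for $L<G$ every $K$ occurring in that sum is proper, so $e_G^G\cdot\operatorname{Inf}(e_{L/N}^{G/N})=0$, whence $\operatorname{Defl}(e_G^G)\cdot e_{L/N}^{G/N}=0$. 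Your remark that ``inflation and deflation are not adjoint \dots\ for otherwise $m_{G,N}$ would never vanish'' is beside the point; the projection formula is not an adjunction and holds unconditionally.

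Once this is established, transitivity of deflation gives the clean multiplicativity $m_{G,K}=m_{G,N}\,m_{G/N,\,K/N}$ for $N\le K$ with $N,K\unlhd G$, and parts (1)--(3) follow exactly along the lines of your third paragraph, with no induction on side terms required.
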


We collect some properties of $m_{G,N}$ that will be needed later.

\begin{proposition}\cite[Proposition 2.5]{Bo1} Let $G$ be a finite group.
Then $G$ is a $B$-group if and only if $m_{G,N}=0$ for any minimal (non-trivial) normal subgroup of $G$.
\end{proposition}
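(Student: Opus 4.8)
The plan is to prove the two implications separately, the forward one being immediate and the reverse one carrying all the content. If $G$ is a $B$-group, then by Definition 2.3 we have $m_{G,N}=0$ for \emph{every} non-trivial normal subgroup $N\unlhd G$, hence in particular $m_{G,N}=0$ whenever $N$ is a minimal non-trivial normal subgroup. Nothing further is needed for this direction.

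For the reverse direction I would argue by contradiction, using part (3) of Theorem 2.6 as the essential bridge. Suppose $m_{G,N}=0$ for every minimal non-trivial normal subgroup $N$, yet $G$ is not a $B$-group. Then there exists a non-trivial normal subgroup $M\unlhd G$ with $m_{G,M}\neq 0$. By the equivalence between $m_{G,M}\neq 0$ and ``$\beta(G)$ is isomorphic to a quotient of $G/M$'' in Theorem 2.6(3), we obtain a surjection $G/M\twoheadrightarrow\beta(G)$. Now choose a minimal normal subgroup $N$ of $G$ with $N\leq M$; such an $N$ exists because $M$ is a non-trivial normal subgroup. Since $N\leq M$, the correspondence (third isomorphism) theorem gives a surjection $G/N\twoheadrightarrow G/M$, so $G/M$ is a quotient of $G/N$. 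Composing the two surjections $G/N\twoheadrightarrow G/M\twoheadrightarrow\beta(G)$ shows that $\beta(G)$ is itself a quotient of $G/N$.

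Applying the same equivalence in Theorem 2.6(3), this time to the normal subgroup $N$, then forces $m_{G,N}\neq 0$, which contradicts the hypothesis that $m_{G,N}=0$ for the minimal normal subgroup $N\leq M$. Therefore no such $M$ can exist, i.e.\ $m_{G,M}=0$ for all non-trivial normal subgroups $M$, and $G$ is a $B$-group. This closes the reverse implication.

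The only substantive ingredient is the correct invocation of Theorem 2.6(3); the auxiliary facts, namely the existence of a minimal normal subgroup inside $M$ and the transitivity of ``being a quotient'' under $N\leq M$, are routine. I expect the main obstacle to be justifying the use of Theorem 2.6(3) without circularity: if that theorem were logically downstream of the present statement in the intended development, I would instead look for a direct proof through a M\"obius-function identity expressing $m_{G,M}$ for an arbitrary non-trivial $M$ in terms of the values $m_{G,N}$ over minimal normal subgroups $N\leq M$, and then show that vanishing on all minimal normal subgroups propagates upward. Granting Theorem 2.6 as stated in the excerpt, however, the contradiction argument above is complete.
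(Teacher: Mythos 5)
Your proof is correct. One point of comparison is moot here: the paper does not prove this statement at all --- it is quoted verbatim from Bouc (\cite[Proposition 2.5]{Bo1}) as background material, so there is no in-paper argument to measure yours against. Your two directions are sound: the forward implication is indeed immediate from Definition 2.3, and the reverse implication via Theorem 2.6(3) works exactly as you describe, provided one reads that theorem with the evident typo corrected (the paper writes $m_{G,N}\neq 0$ and $\beta(G)\cong\beta(G/N)$ where $M$ is plainly intended, since the quantified subgroup is $M\unlhd G$). Your worry about circularity is also resolved favorably: in Bouc's development, Theorem 5.4.11 of \cite{Bo2} (the paper's Theorem 2.6) is proved before and independently of the minimal-normal-subgroup criterion, so invoking it is legitimate; indeed your argument is essentially how the criterion is obtained in Bouc's work. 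A small remark on your fallback plan: the ``M\"obius-function identity'' you would reach for exists and is cleaner than you might expect --- Bouc proves a multiplicativity formula $m_{G,M}=m_{G,N}\cdot m_{G/N,M/N}$ for normal subgroups $N\leq M$ of $G$, from which the reverse implication is a one-line consequence (take $N$ minimal inside $M$; if $m_{G,N}=0$ then $m_{G,M}=0$). Either route is acceptable; yours uses only what the excerpt explicitly provides.
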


\begin{proposition}\cite[Proposition 5.6.1]{Bo2} Let $G$ be a finite group. Then
$m_{G,G}=0$ if and only if $G$ is not cyclic.
If $G$ be cyclic of order $n$,
then $m_{G, G}=\varphi(n)/n$, where $\varphi$ is the Euler totient function.
\end{proposition}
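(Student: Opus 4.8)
The plan is to reduce the computation of $m_{G,G}$ to a count of the generators of $G$ by a standard Möbius-inversion argument. First I would observe that when $N=G$, the summation condition $XN=G$ in Definition 2.1 becomes $XG=G$, which holds for \emph{every} subgroup $X\leq G$. Hence the defining formula degenerates to
$$m_{G,G}=\frac{1}{|G|}\sum_{X\leq G}|X|\,\mu(X,G),$$
so the entire problem reduces to evaluating the integer $S(G):=\sum_{X\leq G}|X|\,\mu(X,G)$.

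The key step is to rewrite the weight $|X|$ combinatorially. Since $|X|$ equals the number of elements $g\in G$ with $g\in X$, and the condition $g\in X$ is equivalent to $\langle g\rangle\leq X$, I would substitute this into $S(G)$ and exchange the order of summation:
$$S(G)=\sum_{X\leq G}\mu(X,G)\sum_{\substack{g\in G\\ \langle g\rangle\leq X}}1=\sum_{g\in G}\;\sum_{\langle g\rangle\leq X\leq G}\mu(X,G).$$
Now the inner sum is governed by the defining relation of the Möbius function on the interval $[\langle g\rangle,G]$ of the subgroup poset, namely $\sum_{\langle g\rangle\leq X\leq G}\mu(X,G)=\delta_{\langle g\rangle,\,G}$, which is $1$ precisely when $\langle g\rangle=G$ and $0$ otherwise. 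Therefore $S(G)$ collapses to the number of elements of $G$ that generate $G$.

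From here the two cases are immediate. If $G$ is not cyclic, no single element generates $G$, so $S(G)=0$ and hence $m_{G,G}=0$. If $G$ is cyclic of order $n$, the generators of $G$ are exactly the elements of order $n$, of which there are $\varphi(n)$; thus $S(G)=\varphi(n)$ and $m_{G,G}=\varphi(n)/n$. This simultaneously yields the stated equivalence: $m_{G,G}=0$ forces $S(G)=0$, i.e.\ $G$ possesses no generator and is therefore non-cyclic, while the converse is precisely the first case.

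I expect essentially no serious obstacle; the only points requiring care are bookkeeping. One must confirm that the condition $XN=G$ genuinely degenerates to "all subgroups" when $N=G$ (rather than forcing $X=G$), and one must use the \emph{correct} Möbius convention, namely the upper-interval identity $\sum_{\langle g\rangle\leq X\leq G}\mu(X,G)=\delta_{\langle g\rangle,G}$ (the two-sided inverse relation $\zeta\cdot\mu=\delta$ in the incidence algebra), as opposed to the lower-interval version. Both are routine, but they are the places where a sign or indexing slip could enter.
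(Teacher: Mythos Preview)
Your argument is correct: rewriting $|X|=\sum_{g\in X}1=\sum_{\langle g\rangle\leq X}1$, swapping sums, and applying the upper-interval M\"obius identity $\sum_{\langle g\rangle\leq X\leq G}\mu(X,G)=\delta_{\langle g\rangle,G}$ cleanly yields $S(G)=\#\{g\in G:\langle g\rangle=G\}$, from which both conclusions follow. The bookkeeping points you flag are handled correctly.

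As for the comparison: the paper does not actually supply a proof of this proposition. It is quoted verbatim from \cite[Proposition~5.6.1]{Bo2} (appearing both as Proposition~1.2 and Proposition~2.8) and used as a known input, so there is no in-paper argument to match yours against. Your proof is the standard one and is essentially what one finds in Bouc's book; nothing in the present paper conflicts with it.
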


\begin{remark} If $G$ is a finite simple group, then $G$ is a $B$-group if and only if
$G$ is not abelian.
\end{remark}

We collect two results that are the relations about $G$ and $\beta(G)$.

When $p$ is a prime number, recall that a finite group $G$ is called cyclic modulo $p$ (or $p$-hypo-elementary) if
$G/O_{p}(G)$ is cyclic. And M. Baumann has proven the following theorem.

\begin{theorem}\cite[Theorem 3]{Ba} Let $p$ be a prime number and $G$ be a finite group. Then $\beta(G)$ is cyclic modulo $p$ if
and only if $G$ is cyclic modulo $p$.
\end{theorem}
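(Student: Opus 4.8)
The plan is to split the stated equivalence into two implications, dispatch the elementary one directly, and reduce the substantive one to a single vanishing property of $m_{G,N}$. For the elementary implication, suppose $G$ is cyclic modulo $p$ and observe first that the class of groups which are cyclic modulo $p$ is closed under quotients: if $K\unlhd G$, then $O_p(G)K/K$ is a normal $p$-subgroup of $G/K$, so it lies in $O_p(G/K)$, while $(G/K)/(O_p(G)K/K)\cong G/(O_p(G)K)$ is a quotient of the cyclic group $G/O_p(G)$ and hence cyclic; thus $(G/K)/O_p(G/K)$ is cyclic. Since $\beta(G)$ is by definition a quotient of $G$ (\cite[Notation 2.3]{Bo1}), it follows at once that $\beta(G)$ is cyclic modulo $p$.

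For the converse, suppose $\beta(G)$ is cyclic modulo $p$, and write $\beta(G)=G/N_0$ with $N_0\unlhd G$ maximal subject to $m_{G,N_0}\neq 0$. Then $G/N_0$ is cyclic modulo $p$ while $m_{G,N_0}\neq 0$, so the whole theorem follows once I prove the following Key Lemma: if $N\unlhd G$ satisfies $m_{G,N}\neq 0$ and $G/N$ is cyclic modulo $p$, then $G$ is cyclic modulo $p$. I would establish it in contrapositive form: if $G$ is not cyclic modulo $p$, then $m_{G,N}=0$ for every $N\unlhd G$ with $G/N$ cyclic modulo $p$. Note that taking $N=1$, where $m_{G,1}=1\neq 0$, is consistent with this formulation and recovers the elementary implication, so the reduction loses nothing.

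The heart of the argument is this vanishing, and it is where Conlon's induction theorem \cite[(80.51)]{CR} is meant to be used. The strategy is to read the rational number $m_{G,N}=\frac{1}{|G|}\sum_{XN=G}|X|\mu(X,G)$ as a multiplicity extracted from the linearization map sending a finite $G$-set to its permutation module over a field of characteristic $p$, carrying the Burnside ring $B(G)$ into the modular representation ring. Conlon's theorem writes the pertinent unit class in that ring as an explicit integral combination of classes induced from $p$-hypo-elementary (equivalently, cyclic modulo $p$) subgroups $H\le G$; pairing this against $m_{G,N}$ and applying Frobenius reciprocity rewrites $m_{G,N}$ as a $\mathbb{Z}$-linear combination of the analogous quantities attached to the cyclic-modulo-$p$ sections of $G$. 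For those sections the Key Lemma is vacuous, and a M\"obius-function bookkeeping then forces the global coefficient to vanish exactly when $G$ fails to be cyclic modulo $p$. The main obstacle is precisely this translation step: identifying the exact modular invariant whose non-vanishing matches $m_{G,N}\neq 0$ and verifying that Conlon's theorem applies to it in the required integral form. Once that dictionary is fixed, the localization to $p$-hypo-elementary sections and the final counting, together with the care needed for the maximality of $N_0$ in the reduction, are routine.
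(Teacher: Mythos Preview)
The paper does not prove this theorem. It is quoted in Section~2 as a background result due to Baumann, with the citation \cite[Theorem 3]{Ba} and the remark ``M.~Baumann has proven the following theorem''; no argument is supplied in the present paper. Indeed, the Introduction explicitly says that the result of \cite{XZ} ``relies on the proposition of Baumann \cite{Ba}, and his proposition relies on the Conlon theorem \cite[(80.51)]{CR}'', and one of the motivations of the paper is precisely to develop tools for computing $m_{G,N}$ directly, so as not to depend on this external input. There is therefore nothing in the paper to compare your proposal against.

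On the substance of your proposal: the easy implication is correctly handled, and your overall strategy for the hard implication (reduce to a vanishing statement for $m_{G,N}$ and invoke Conlon's induction theorem over a field of characteristic $p$) is indeed the route Baumann takes in \cite{Ba}, as the paper itself signals. However, your write-up is a plan rather than a proof: you explicitly flag the ``translation step''---identifying the modular invariant matching $m_{G,N}$ and checking that Conlon applies to it in the needed integral form---as the main obstacle, and you do not carry it out. That step is exactly the content of Baumann's argument, and declaring the remaining bookkeeping ``routine'' does not discharge it. If you want a self-contained proof, you must actually exhibit the idempotent (or the relevant primitive element) in the Burnside ring whose image under linearization to $k$-permutation modules detects $p$-hypo-elementarity, write out how Conlon's theorem expresses it via induction from $p$-hypo-elementary subgroups, and then perform the M\"obius computation that forces $m_{G,N}=0$ when $G$ is not cyclic modulo $p$.
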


In \cite{Bo1}, S. Bouc has proven the Conjecture under the additional assumption that finite group $G$ is solvable.

\begin{theorem}\cite[Theorem 3.1]{Bo1} Let $G$ be a solvable finite group. Then $\beta(G)$ is nilpotent if
and only if $G$ is nilpotent.
\end{theorem}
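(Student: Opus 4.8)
The plan is to prove both implications, the forward one being immediate and the converse requiring the full force of the $B$-group machinery collected above. First I would dispose of the easy direction: if $G$ is nilpotent, then $\beta(G)$ is by definition a quotient $G/N$ of $G$ (\cite[Notation 2.3]{Bo1}), and quotients of nilpotent groups are nilpotent, so $\beta(G)$ is nilpotent; this half needs no solvability hypothesis. The substance is the converse, which I would argue by induction on $|G|$, proving that a solvable $G$ with $\beta(G)$ nilpotent must itself be nilpotent.

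For the inductive step, fix a minimal normal subgroup $N$ of $G$; since $G$ is solvable, $N$ is elementary abelian, say a $p$-group. Because every quotient of $G/N$ is a quotient of $G$, the group $\beta(G/N)$, which is a $B$-group by \cite[Theorem 5.4.11]{Bo2}, is a $B$-group quotient of $G$, hence a quotient of $\beta(G)$ by the maximality property in \cite[Theorem 5.4.11]{Bo2}; as $\beta(G)$ is nilpotent, so is $\beta(G/N)$, and induction gives that $G/N$ is nilpotent. This holds for every minimal normal $N$. If $G$ had two distinct minimal normal subgroups $N_1 \neq N_2$, then $N_1 \cap N_2 = 1$ would embed $G$ into $G/N_1 \times G/N_2$, a product of nilpotent groups, forcing $G$ nilpotent. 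So I may assume $G$ has a \emph{unique} minimal normal subgroup $N$, with $G/N$ nilpotent; if $G$ is nilpotent we are done, so suppose not and aim for a contradiction. By the standard fact that $N \leq \Phi(G)$ together with $G/N$ nilpotent would make $G$ nilpotent, the assumption forces $N \not\leq \Phi(G)$; then $N$ has a complement, so $G = N \rtimes H$ with $H \cong G/N$ nilpotent, and $N$ is an irreducible $\mathbb{F}_p H$-module on which $H$ acts nontrivially (otherwise $G = N \times H$ would be nilpotent).

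The heart of the argument is to compute $m_{G,N}$ and show it vanishes, which by \cite[Proposition 2.5]{Bo1} applied to the unique minimal normal subgroup $N$ will make $G$ a $B$-group, so that $\beta(G) = G$ is non-nilpotent, the desired contradiction. I would first classify the supplements $X$ with $XN = G$: for such $X$ the intersection $X \cap N$ is normalized by both $N$ and $X$, hence by $G = XN$, so $X \cap N \in \{1, N\}$ by minimality of $N$; this leaves exactly $X = G$ and the complements of $N$, each of which is a maximal subgroup (a subgroup strictly between a complement and $G$ would meet $N$ in a proper nontrivial normal subgroup) and therefore has $\mu(X,G) = -1$. Counting the complements as $|Z^1(H,N)| = |H^1(H,N)|\cdot|N/C_N(H)|$ and using that irreducibility with nontrivial action gives $C_N(H) = 0$, the defining sum collapses to
\[
m_{G,N} = 1 + \frac{1}{|N|}\sum_{X \text{ complement}} \mu(X,G) = 1 - |H^1(H,N)|.
\]

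The main obstacle, and the point where solvability and the uniqueness of $N$ are genuinely used, is to show $H^1(H,N) = 0$. I would argue that $p \nmid |H|$: the Sylow $p$-subgroup $H_p$ of the nilpotent group $H$ is normal in $H$ and, acting on the irreducible $\mathbb{F}_p H$-module $N$, has nonzero fixed points $N^{H_p}$, which form an $H$-submodule since $H_p \unlhd H$, hence equal $N$; thus $H_p$ fixes $N$ pointwise. Then $H_p$ is centralized by $N$ and normalized by $H$, so $H_p \unlhd G$, and if $H_p \neq 1$ it would contain a minimal normal subgroup of $G$, necessarily $N$ by uniqueness, contradicting $H_p \cap N = 1$. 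Hence $p \nmid |G/N|$, so $H^1(H,N) = 0$ by coprimeness of $|H|$ and $|N|$, whence $m_{G,N} = 0$ and $G$ is a $B$-group, completing the induction. I expect the delicate step to be the precise evaluation of the Möbius sum and the clean identification of its vanishing with the triviality of $H^1(H,N)$; once that formula is in place, the coprimeness argument closes the proof.
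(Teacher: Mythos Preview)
The paper does not prove this theorem at all: it appears in Section~2 among the background results quoted from the literature, with the citation \cite[Theorem 3.1]{Bo1} and no argument given. So there is no ``paper's own proof'' to compare against.

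That said, your proposal is a correct and complete proof, and it is essentially the argument Bouc gives in \cite{Bo1}. The reduction to a unique minimal normal subgroup $N$ via the embedding $G\hookrightarrow G/N_1\times G/N_2$, the passage to a split extension $G=N\rtimes H$ using $N\not\leq\Phi(G)$, the classification of supplements of $N$ as either $G$ or complements (all maximal), and the resulting formula
\[
m_{G,N}=1-\frac{\#\{\text{complements of }N\}}{|N|}=1-|H^1(H,N)|
\]
are exactly the steps in Bouc's proof. Your verification that $p\nmid|H|$ (using that $N^{H_p}$ is a nonzero $H$-submodule and that a nontrivial $H_p\unlhd G$ would have to contain the unique minimal normal subgroup $N$) is also the standard route, yielding $H^1(H,N)=0$ by coprimeness and hence $m_{G,N}=0$, so that $G=\beta(G)$ is a non-nilpotent $B$-group, the desired contradiction.

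One small cosmetic point: where you write ``$C_N(H)=0$'' you mean the fixed-point submodule $N^H$ is trivial (equivalently $C_N(H)=1$), which indeed follows from irreducibility and the nontriviality of the action; this is what makes $|B^1(H,N)|=|N|$ and collapses the complement count to $|H^1(H,N)|\cdot|N|$. With that understood, every step is sound.
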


\section{\bf The  M$\mathrm{\ddot{o}}$bius function of the posets of groups}
In this section, we introduce a lemma about the M$\mathrm{\ddot{o}}$bius function.
In fact, this Lemma
will be used in computing $m_{G,N}$ where $G$ is a finite group and $N\unlhd G$.

Let $G$ be a finite group and let $\mu$ denote the M$\mathrm{\ddot{o}}$bius function of
subgroup lattice of $G$. We refer to \cite[p.94]{Y}:

Let $K,D\leq G$, recall the Zeta function of $G$ as following:
$$\zeta(K, D)=\left\{ \begin{array}{ll}
1, &
\mbox{if}~ K\leq D;
\\[2ex] 0, &\mbox{if} ~K\nleq D.\end{array}\right.$$

Set $n:=|\{K|K\leq G\}|$, we have a $n\times n$ matrix $A$ as following:
$$A:=(\zeta(K, D))_{K,D\leq G}.$$
It is easy to find that $A$ is an invertible matrix, so there exists $A^{-1}$ such that
$$AA^{-1}=E,$$
Here, $E$ is an identity element. Recall the M$\mathrm{\ddot{o}}$bius function as following:
$$(\mu(K, D))_{K,D\leq G}=A^{-1}.$$

Now, we set the subgroup lattice of $G$ as following:
$$\{K|K\leq G\}:=\{1=K_1, K_2,\ldots, K_n=G\}$$
where $n=|\{K|K\leq G\}|$.

Let us list the main lemma of this section as follows, and this lemma is used to prove Proposition 4.1 in Section 4.
And the proof of this lemma is due to the referee of \cite{LXZ}, and this proof is  easier than the proof of \cite[Section 3]{LXZ}.
\begin{lemma}Let $G$ be a finite group. Let $\{K_i|i=1,2,\ldots, n\}$ be the set of all subgroups of $G$.
 And Set $K_1=1, K_n=G$. Then we have $\mu(K_i, K_{i'})=0$ if $K_i\nleq K_{i'}$.
\end{lemma}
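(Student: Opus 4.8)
The plan is to work directly from the matrix identity that defines $\mu$. Since $(\mu(K,D))_{K,D}=A^{-1}$ with $A=(\zeta(K,D))_{K,D}$, the relation $A^{-1}A=E$ reads, entrywise,
$$\sum_{K_j\leq K_{i'}}\mu(K_i,K_j)=\delta_{i,i'}$$
for every pair of indices $i,i'$. This is the fundamental recursion I would exploit: it lets me isolate $\mu(K_i,K_{i'})$, the $j=i'$ term, in terms of the values $\mu(K_i,K_j)$ attached to the proper subgroups $K_j\lneq K_{i'}$.

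I would then fix $K_i$ and prove the assertion by strong induction on the order $|K_{i'}|$, ranging over those $K_{i'}$ with $K_i\nleq K_{i'}$. The base case is $K_{i'}=1$, where the recursion has an empty sum over proper subgroups and forces $\mu(K_i,1)=0$ whenever $K_i\neq 1$. For the inductive step, when $K_i\nleq K_{i'}$ we have $i\neq i'$, so the right-hand side vanishes and
$$\mu(K_i,K_{i'})=-\sum_{K_j\lneq K_{i'}}\mu(K_i,K_j).$$
The key observation is that every proper subgroup $K_j\lneq K_{i'}$ must satisfy $K_i\nleq K_j$: otherwise $K_i\leq K_j\lneq K_{i'}$ would yield $K_i\leq K_{i'}$, contradicting the hypothesis. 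Since $|K_j|<|K_{i'}|$, the induction hypothesis gives $\mu(K_i,K_j)=0$ for every such $j$, so the entire sum vanishes and $\mu(K_i,K_{i'})=0$.

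The step I expect to be the crux is precisely this comparability argument. Merely listing the subgroups along a linear extension of inclusion makes $A$ upper-unitriangular, and inverting a unitriangular matrix only yields $\mu(K_i,K_{i'})=0$ for $i>i'$; it says nothing about \emph{incomparable} pairs. So the real content of the lemma is the vanishing on incomparable pairs, and that is exactly what the ``no intermediate subgroup'' observation delivers inside the induction. As an alternative route and a sanity check, I would note that the recursion above identifies the matrix-defined $\mu$ with the classical M\"obius function of the subgroup poset, which is constructed to vanish off the order relation; uniqueness of the matrix inverse then returns the statement at once.
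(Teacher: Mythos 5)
Your proof is correct, but it takes a genuinely different route from the paper's. You turn the matrix identity $A^{-1}A=E$ into the entrywise recursion $\sum_{K_j\leq K_{i'}}\mu(K_i,K_j)=\delta_{i,i'}$ and run a strong induction on $|K_{i'}|$, the crux being that transitivity of inclusion forbids intermediate subgroups: if $K_i\nleq K_{i'}$, then every $K_j\lneq K_{i'}$ also fails to contain $K_i$, so the induction hypothesis annihilates the whole sum. (One small point worth stating: the paper records only $AA^{-1}=E$, so you should note that a right inverse of a square invertible matrix is two-sided, which makes the column relation you use legitimate.) The paper argues structurally instead: it introduces the incidence algebra $A_X$ of matrices supported on the order relation, a unital subalgebra of the full matrix algebra precisely because $\leq$ is transitive; since $\zeta_X$ is unitriangular, $\zeta_X-\mathrm{Id}$ is nilpotent, so $\zeta_X^{-1}$ is a finite Neumann series $\sum_i(-1)^i(\zeta_X-\mathrm{Id})^i$ whose terms all lie in $A_X$, whence $\mu=\zeta_X^{-1}\in A_X$, which is the lemma. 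Both arguments pivot on the same fact (transitivity), deployed differently: your induction is more elementary and self-contained, in effect reconstructing the classical recursive definition of the M\"obius function of a poset, while the paper's subalgebra argument is shorter, avoids induction bookkeeping, and applies verbatim to any finite poset. Your closing observation is also apt: unitriangularity alone (for a linear extension of the poset) only gives vanishing for $i>i'$, so the real content is the vanishing on incomparable pairs --- your ``no intermediate subgroup'' step and the paper's closure of $A_X$ under multiplication are the two ways of supplying exactly that content.
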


\begin{proof}Set the poset $\{K_i|i=1,2,\ldots, n\}:=X$, one defines the
incidence (or M$\ddot{\mathrm{o}}$bius) algebra $A_X$ of $X$ at the set of square matrices m
indexed by $X\times X$, with integral coefficients, such that
$$\forall~ (x,y)\in X\times X, m(x,y) \neq 0 \Longrightarrow  x \leq  y ~in~ X.$$
This defines clearly a unital subalgebra of the algebra of all square matrices
indexed by $X\times X$, with integral coefficients. The incidence matrix $\zeta_X$ of
$X$ belongs to $A_X$. Moreover, since $\zeta_X$ is unitriangular (up to a suitable
permutation of $X$), the matrix $\zeta_X-\mathrm{Id}$ is nilpotent. Hence
$$\zeta_X^{-1}=\mathrm{Id}+(\zeta_X-\mathrm{Id})^{-1}=\sum_{i=1}^{+\infty}(-1)^i(\zeta_X-\mathrm{Id})^i,$$
and the summation is actually finite. Since $\zeta_X- \mathrm{Id}\in A_X$, it follows that
$\zeta_X^{-1}\in A_X$, so $\mu(x,y) \neq 0$ implies $x\leq y$, for any $x,y\in X.$ This proves the
lemma.
\end{proof}

\section{\bf To Compute the $M'_{G, N}$}

In \cite{LXZ},
we compute $m_{G,N}$ when $|G:N|=p$ for some prime number, and we have the following observation:
\begin{eqnarray*}
&~&m_{G,N}+\frac{1}{|G|}\sum_{X\leq N} |X|\mu(X, G)\\
&=&\frac{1}{|G|}\sum_{\substack{XN= G\\ X\leq G}} |X|\mu(X, G)+\frac{1}{|G|}\sum_{X\leq N} |X|\mu(X, G)\\
&=&\frac{1}{|G|}\sum_{\substack{XN= G\\ X\leq G}} |X|\mu(X, G)+\frac{1}{|G|}\sum_{\substack{XN\neq G\\ X\leq G}} |X|\mu(X, G)\\
&=&\frac{1}{|G|}\sum_{X\leq G} |X|\mu(X, G)\\
&=&m_{G,G}=0, \mathrm{if}~ G \mathrm{~is ~not~ cyclic}.
\end{eqnarray*}
So to compute $m_{G,N}$, we can compute $\frac{1}{|G|}\sum_{X\leq N} |X|\mu(X, G)$ first.
Now, we set
$$m_{G,N}':=\frac{1}{|G|}\sum_{\substack{XN\neq G\\ X\leq G}} |X|\mu(X, G)=\frac{1}{|G|}\sum_{X\leq N} |X|\mu(X, G);$$
and set
$$M_{G,N}':=\sum_{X\leq N} |X|\mu(X, G)=|G|m_{G,N}'.$$
In \cite{LXZ}, we gave a relation between $M_{G,N}'$ and the Euler characteristic of the nerve space
of some class poset of the group $G$. But the condition $|G:N|=p$ is so strong. Under the suggestions of the referee report
 of \cite{LXZ},
we try to get rid of this condition, and we can get the following propositions.
And the reason of this section
can be found in Remark 4.3.

Now, let $G$ be a finite group and $N \lneq G$. We set
$$m_{G,N}':=\frac{1}{|G|}\sum_{X\leq N} |X|\mu(X, G);$$
and set
$$M_{G,N}':=\sum_{X\leq N} |X|\mu(X, G)=|G|m_{G,N}'.$$
Thus we have the following propositions.
\begin{proposition}Let $G$ be a finite group and $N\lneq G$.
Then
$$
M_{G,N}'
=-\sum_{Y\lneq G}\sum_{X\leq N\cap Y} |X|\mu(X, Y)$$
\end{proposition}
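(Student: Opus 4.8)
The plan is to express $\mu(X,G)$ in terms of the values $\mu(X,Y)$ at proper subgroups $Y$ of $G$ by means of the defining recursion of the M\"obius function, and then simply to interchange the order of summation.

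First I would recall the basic identity read off from $A^{-1}A=E$ (with the matrix conventions of Section 3): for every $X\leq G$,
$$\sum_{X\leq Y\leq G}\mu(X,Y)=\delta_{X,G},$$
where $\delta$ is the Kronecker symbol and the sum is effectively finite because $\mu(X,Y)\neq 0$ forces $X\leq Y$, by Lemma 3.1. The crucial use of the hypothesis $N\lneq G$ enters here: every subgroup $X\leq N$ then satisfies $X\lneq G$, so the right-hand side vanishes, and isolating the top term $Y=G$ yields
$$\mu(X,G)=-\sum_{X\leq Y\lneq G}\mu(X,Y)\qquad\mbox{for all } X\leq N.$$

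Substituting this into the definition $M_{G,N}'=\sum_{X\leq N}|X|\mu(X,G)$ produces a double sum ranging over the pairs $(X,Y)$ with $X\leq N$, $X\leq Y$, and $Y\lneq G$. Interchanging the order of summation, i.e.\ summing first over $X$ for each fixed proper subgroup $Y$, and observing that the two constraints $X\leq N$ and $X\leq Y$ are together equivalent to the single constraint $X\leq N\cap Y$, I would obtain
$$M_{G,N}'=-\sum_{Y\lneq G}\sum_{X\leq N\cap Y}|X|\mu(X,Y),$$
which is exactly the asserted formula.

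This argument is nothing more than the standard M\"obius recursion combined with a reindexing of the summation, so I do not expect any serious obstacle. The only point that genuinely requires care is the appeal to $N\lneq G$ to guarantee $\delta_{X,G}=0$ for every $X\leq N$; this is precisely what makes the top-degree term collapse and what accounts for the global minus sign in the statement. (If one instead allowed $N=G$, the term $X=G$ would survive and the identity would fail, consistent with the hypothesis being imposed.)
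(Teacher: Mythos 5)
Your proposal is correct and follows essentially the same route as the paper: the M\"obius recursion $\sum_{X\leq Y\leq G}\mu(X,Y)=0$ for $X\lneq G$ (valid since $N\lneq G$ forces $X\leq N$ to be proper), substitution into the definition of $M_{G,N}'$, and an interchange of summation with the observation that $X\leq N$ and $X\leq Y$ together amount to $X\leq N\cap Y$. The only cosmetic difference is that the paper routes the reindexing through Lemma 3.1 (extending the inner sum to all $Y\lneq G$ before swapping), whereas you swap the pair constraints directly, which is equally valid.
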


\begin{proof}Since $N$ is a proper subgroup
of finite group $G$, thus
if $X \leq N$, then $X$ is a proper subgroup of $G$. Then, by standard properties
of the M$\mathrm{\ddot{o}}$bius function
$$(\ast)~~~~~~~~~~\sum_{X\leq Y\leq G}\mu(X, Y)=0=\mu(X,G)+\sum_{X\leq Y\lneq G}\mu(X, Y).$$
It follows that
$$\mu(X,G)=-\sum_{X\leq Y\lneq G}\mu(X, Y).$$
Reporting this value in the definition of $M'_{G,N}$
gives
$$M_{G,N}'=-\sum_{X\leq N} |X|\sum_{X\leq Y\lneq G}\mu(X, Y).$$
By Lemma 3.1, we can see if $X\nleq Y$, then $\mu(X,Y)=0$. So we have
\begin{eqnarray*}
\sum_{X\leq N} |X|\sum_{X\leq Y\lneq G}\mu(X, Y)
&=&\sum_{X\leq N} |X|\sum_{Y\lneq G}\mu(X, Y)\\
&=&\sum_{Y\lneq G} \sum_{X\leq N}|X|\mu(X, Y)\\
&=&\sum_{Y\lneq G}\sum_{X\leq N\cap Y} |X|\mu(X, Y).
\end{eqnarray*}
Hence, we have
$$
M_{G,N}'
=-\sum_{Y\lneq G}\sum_{X\leq N\cap Y} |X|\mu(X, Y).$$
\end{proof}

\begin{proposition}Let $G$ be a finite group and $N\lneq G$.
Then
$$M_{G,N}'=-\sum_{\substack{C\leq N \\ C~ is~ cyclic}}\varphi(|C|)-\sum_{\substack{Y\lneq G\\ Y\nleq N}}M'_{Y, Y\cap N}.$$
\end{proposition}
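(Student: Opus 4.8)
The plan is to start from the identity established in Proposition 4.1,
$$M_{G,N}'=-\sum_{Y\lneq G}\sum_{X\leq N\cap Y} |X|\mu(X, Y),$$
and to split the outer sum over proper subgroups $Y\lneq G$ according to whether $Y\leq N$ or $Y\nleq N$. Since $N\lneq G$, every subgroup contained in $N$ is automatically a proper subgroup of $G$, so the two cases form a disjoint cover of $\{Y\mid Y\lneq G\}$ and no terms are lost or double-counted in this dichotomy.

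First I would treat the terms with $Y\leq N$. For such $Y$ one has $N\cap Y=Y$, so the inner sum collapses to $\sum_{X\leq Y}|X|\mu(X,Y)$, which is exactly $|Y|\,m_{Y,Y}$. Here the key input is Bouc's formula (Proposition 2.8): $m_{Y,Y}=\varphi(|Y|)/|Y|$ when $Y$ is cyclic, and $m_{Y,Y}=0$ otherwise. Consequently $\sum_{X\leq Y}|X|\mu(X,Y)=\varphi(|Y|)$ for cyclic $Y$ and vanishes for non-cyclic $Y$, so the entire contribution of the terms $Y\leq N$ reduces to
$$\sum_{\substack{C\leq N\\ C\ \mathrm{is\ cyclic}}}\varphi(|C|).$$

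Next I would treat the terms with $Y\lneq G$ and $Y\nleq N$. In this case the condition $Y\nleq N$ forces $Y\cap N$ to be a proper subgroup of $Y$, so the inner sum $\sum_{X\leq N\cap Y}|X|\mu(X,Y)$ matches verbatim the definition of $M'_{Y,Y\cap N}$ (with $Y$ in the role of the ambient group and the proper subgroup $Y\cap N$ in the role of $N$). Combining the two pieces and restoring the overall minus sign then yields
$$M_{G,N}'=-\sum_{\substack{C\leq N\\ C\ \mathrm{is\ cyclic}}}\varphi(|C|)-\sum_{\substack{Y\lneq G\\ Y\nleq N}}M'_{Y,Y\cap N},$$
which is the claim.

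The only genuinely substantive step — and the one I would be most careful about — is the reduction of the $Y\leq N$ part: one must recognize $\sum_{X\leq Y}|X|\mu(X,Y)$ as $|Y|\,m_{Y,Y}$ and then invoke the cyclic/non-cyclic dichotomy of Proposition 2.8 to discard every non-cyclic $Y$. The remaining work is purely a bookkeeping split of the outer sum together with a direct appeal to the definition of $M'_{Y,Y\cap N}$, both of which are routine once the dichotomy is in place.
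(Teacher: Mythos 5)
Your proof is correct and follows essentially the same route as the paper: both start from Proposition 4.1, split the outer sum over $Y\lneq G$ into the cases $Y\leq N$ and $Y\nleq N$, identify the former inner sums as $|Y|m_{Y,Y}$ and apply Bouc's cyclic/non-cyclic dichotomy (Proposition 2.8), and recognize the latter inner sums as $M'_{Y,Y\cap N}$ by definition. Your additional observations (that the two cases partition $\{Y\mid Y\lneq G\}$ and that $Y\nleq N$ forces $Y\cap N\lneq Y$) are correct details the paper leaves implicit.
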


\begin{proof}By Proposition 4.1, we have
$$M_{G,N}'
=-\sum_{Y\lneq G}\sum_{X\leq N\cap Y} |X|\mu(X, Y)$$

We will compute $\sum_{X\leq N\cap Y} |X|\mu(X, Y)$
by considering the cases when $Y\leq N$ and $Y\neq N$ in the following.

\textbf{Case 1.} $Y\leq N$. We have
\begin{eqnarray*}
\sum_{X\leq N\cap Y} |X|\mu(X, Y)
&=&\sum_{X\leq Y} |X|\mu(X, Y)\\
&=&|Y|m_{Y,Y}.
\end{eqnarray*}
If $Y$ is not cyclic, we have $m_{Y,Y}=0$. If
$Y$ is cyclic, we have $m_{Y,Y}=\frac{\varphi(|Y|)}{|Y|}$.
Hence, we have $$\sum_{X\leq N\cap Y} |X|\mu(X, Y)=\left\{ \begin{array}{ll}
\varphi(|Y|), &
\mbox{if}~ Y~is~ cyclic;
\\[2ex] 0, &\mbox{if} ~Y~is~not~ cyclic.\end{array}\right.$$

\textbf{Case 2.} $Y\nleq N$. Then we have
\begin{eqnarray*}
\sum_{X\leq N\cap Y} |X|\mu(X, Y)
&=&M_{Y, Y\cap N}'
\end{eqnarray*}
by the definition of $M_{Y, Y\cap N}'$.

Hence,
\begin{eqnarray*}
M_{G,N}'
&=&-\sum_{\substack{Y\lneq G\\ Y\leq N\\ Y~is~cyclic}}(\sum_{X\leq N\cap Y} |X|\mu(X, Y))\\
&~&-\sum_{\substack{Y\lneq G\\ Y\leq N\\ Y~is~not~cyclic}}(\sum_{X\leq N\cap Y} |X|\mu(X, Y))\\
&~&-\sum_{\substack{Y\lneq G\\ Y\nleq N}}\sum_{X\leq N\cap Y} |X|\mu(X, Y)\\
&=&-\sum_{\substack{Y\lneq G\\ Y\leq N\\ Y~is~cyclic}} \varphi(|Y|)
-(\sum_{\substack{Y\lneq G\\ Y\leq N\\ Y~is~not~cyclic}} 0)
-\sum_{\substack{Y\lneq G\\ Y\nleq N}}M_{Y, Y\cap N}'\\
&=&-\sum_{\substack{C\leq N\\ C~ is~ cyclic}}\varphi(|C|)-\sum_{\substack{Y\lneq G\\ Y\nleq N}}M'_{Y, Y\cap N}.
\end{eqnarray*}
\end{proof}

\begin{remark} To compute $M_{G,N}'$, we need compute $M_{Y, Y\cap N}'$ for every $Y\lneq G$. Since $Y\lneq G$,
thus we can get $M_{G,N}'$ by finite steps.
\end{remark}

\section{\bf A class poset of subgroups of $G$}

To compute $M_{Y, Y\cap N}'$ for every $Y\lneq G$, we define a new class poset of subgroups of $G$ in this section.
And we find the relation between $M_{G,N}'$ and this class poset.

\begin{definition} Let $G$ be a finite group and $N\lneq G$. Let $C$ be a cyclic subgroup of $N$, define
$$\mathfrak{T}_{C}(G, N):=\{X|C\leq X\lneq G, X\nleq N\}.$$
Here, $C\notin \mathfrak{T}_{C}(G, N)$ because $C\leq N$.
We can see that $\mathfrak{T}_{C}(G,N)$ is a poset ordered by inclusion. We can consider poset $\mathfrak{T}_{C}(G, N)$
as a category with one morphism $Y\rightarrow Z$ if $Y$ is a subgroup of $Z$. We set $N(\mathfrak{T}_{C}(G, N))$
is the nerve of the category $\mathfrak{T}_{C}(G, N)$ and $|N(\mathfrak{T}_{C}(G, N))|$ is  the geometric realization
of $N(\mathfrak{T}_{C}(G, N))$. More detail of topology can be seen in \cite{DH}.

Let $\sigma$ be a $n$-simplex of the nerve $N(\mathfrak{T}_{C}(G, N))$ and $\sigma$ not degenerate, that means we have the
following:
$$\sigma: \sigma(0)\to \sigma(1)\to \cdots \to \sigma(n)$$
where $\sigma(i)\in \mathfrak{T}_{C}(G)$ and $\sigma(i)\lneq \sigma(i+1)$ for all $i$.
\end{definition}

%\begin{remark}
%Let $A,B,D\in \mathfrak{T}_{C}(G, N)$, if
%$A\leq B, A\leq D$, then $B\cap D\in \mathfrak{T}_{C}(G, N)$.
%\end{remark}
%
%
%\begin{lemma} Let $H_1\neq H_2\lneq G$ and $N\leq H_1, N\leq H_2$. Let $C$ be a cyclic group and $C\leq H_1\cap H_2$. Then
%$$\mathfrak{T}_{C}(G, H_1\cap H_2)=\mathfrak{T}_{C}(G, H_1)\cup \mathfrak{T}_{C}(G, H_2).$$
%\end{lemma}
%
%\begin{proof} By the definition, we can see
%\begin{eqnarray*}
%X\in \mathfrak{T}_{C}(G, H_1\cap H_2)
%&\Longleftrightarrow& C\leq X\nleq H_1\cap H_2\\
%&\Longleftrightarrow& \mathrm{either}~ C\leq X\nleq H_1 ~\mathrm{or}~ C\leq X\nleq H_2\\
%&\Longleftrightarrow& \mathrm{either}~ X\in \mathfrak{T}_{C}(G, H_1) ~\mathrm{or}~ X\in\mathfrak{T}_{C}(G, H_2)\\
%&\Longleftrightarrow&X\in\mathfrak{T}_{C}(G, H_1)\cup \mathfrak{T}_{C}(G, H_2).
%\end{eqnarray*}
%\end{proof}

Since we use the Euler characteristic of $|N(\mathfrak{T}_{C}(G, N))|$ in Proposition 5.3, thus
we recall the definition of the Euler characteristic as follows:
\begin{definition}\cite[\S 22]{M} The Euler characteristic (or Euler number) of a finite complex $K$ is defined, classically,
by the equation
$$\chi(K) =\sum_{i}(-1)^i \mathrm{rank}(C_i(K)).$$
Said differently, $\chi(K)$ is the alternating sum of the number of simplices of $K$ in
each dimension.

One can also use the reduced Euler characteristic $\widetilde{\chi}(K)$ of $K$, defined by $\widetilde{\chi}(K)=\chi(K)-1$.
\end{definition}

\begin{proposition}Let $G$ be a finite group and $N\lneq G$.
Then
\begin{eqnarray*}
M_{G,N}'
&=&-\sum_{\substack{C\leq N\\C~is~ cyclic}}\varphi(|C|)-\sum_{\substack{Y\lneq G\\ Y\nleq N}}M'_{Y, Y\cap N}\\
&=&\sum_{\substack{C\leq N\\ C~ is~ cyclic}}\widetilde{\chi}(|N(\mathfrak{T}_{C}(G, N))|\cdot \varphi(|C|).
\end{eqnarray*}
Here, $|N(\mathfrak{T}_{C}(G, N))|$ is a simplicial complex associated to the poset $\mathfrak{T}_{C}(G, N)$, and
$\widetilde{\chi}(|N(\mathfrak{T}_{C}(G, N))|)$ is the reduced Euler characteristic of the space $|N(\mathfrak{T}_{C}(G, N))|$.
\end{proposition}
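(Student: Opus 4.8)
The first equality in the statement is precisely Proposition 4.2, so the content to be established is the second equality, which is where the topology enters. The plan is to start from the definition $M'_{G,N}=\sum_{X\leq N}|X|\mu(X,G)$ and decompose each order $|X|$ according to cyclic subgroups: since every element of a finite group generates a unique cyclic subgroup and a cyclic group of order $d$ has exactly $\varphi(d)$ generators, one has $|X|=\sum_{C\leq X,\ C\ \mathrm{cyclic}}\varphi(|C|)$. Substituting this and interchanging the two summations (the constraint $C\leq X\leq N$ forces $C\leq N$) yields $M'_{G,N}=\sum_{C\leq N,\ C\ \mathrm{cyclic}}\varphi(|C|)\big(\sum_{C\leq X\leq N}\mu(X,G)\big)$. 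Comparing with the desired formula, it therefore suffices to prove, for each fixed cyclic $C\leq N$, the identity $\sum_{C\leq X\leq N}\mu(X,G)=\widetilde{\chi}(|N(\mathfrak{T}_{C}(G,N))|)$.

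To prove this identity I would first exploit the defining relation of the M\"obius function. Because $C\leq N\lneq G$ we have $C\neq G$, hence $\sum_{C\leq X\leq G}\mu(X,G)=0$; note this needs only $C\lneq G$ and not that $G$ is non-cyclic. Separating the top term $\mu(G,G)=1$ and splitting the remaining range $C\leq X\lneq G$ according to whether $X\leq N$ or $X\nleq N$ — the latter being exactly the condition defining $\mathfrak{T}_{C}(G,N)$, and the two ranges being disjoint and exhaustive since $N\lneq G$ — gives $\sum_{C\leq X\leq N}\mu(X,G)=-1-\sum_{X\in\mathfrak{T}_{C}(G,N)}\mu(X,G)$. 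Thus the claim reduces to showing $\sum_{X\in\mathfrak{T}_{C}(G,N)}\mu(X,G)=-1-\widetilde{\chi}(|N(\mathfrak{T}_{C}(G,N))|)$.

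The heart of the argument is this last step, which I would carry out by expanding each $\mu(X,G)$ through P.~Hall's chain formula $\mu(X,G)=\sum_{k\geq 1}(-1)^k c_k(X,G)$, where $c_k(X,G)$ counts the chains $X=X_0\lneq X_1\lneq\cdots\lneq X_k=G$. The key structural observation is that $\mathfrak{T}_{C}(G,N)$ is upward closed inside $\{Y:C\leq Y\lneq G\}$: if $X\in\mathfrak{T}_{C}(G,N)$ and $X\leq Y\lneq G$, then $C\leq Y$ and $Y\nleq N$, so $Y\in\mathfrak{T}_{C}(G,N)$. Consequently, in any chain from $X\in\mathfrak{T}_{C}(G,N)$ up to $G$, every term except $G$ itself already lies in $\mathfrak{T}_{C}(G,N)$. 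Summing over all $X\in\mathfrak{T}_{C}(G,N)$ then sets up a bijection (delete the top vertex $G$) between the chains counted by $\sum_{X}c_k(X,G)$ and the $k$-element chains of the poset $\mathfrak{T}_{C}(G,N)$, so that $\sum_{X\in\mathfrak{T}_{C}(G,N)}\mu(X,G)=\sum_{k\geq 1}(-1)^k d_k$, where $d_k$ is the number of $k$-element chains in $\mathfrak{T}_{C}(G,N)$, i.e. the number of $(k-1)$-simplices of the nerve $N(\mathfrak{T}_{C}(G,N))$. Since $\widetilde{\chi}(|N(\mathfrak{T}_{C}(G,N))|)=-1+\sum_{i\geq 0}(-1)^i d_{i+1}=-1-\sum_{k\geq 1}(-1)^k d_k$, this inner sum equals $-1-\widetilde{\chi}(|N(\mathfrak{T}_{C}(G,N))|)$, completing the reduction and hence the proposition.

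I expect the main obstacle to be the careful bookkeeping in this final step: one must verify both the upward-closure of $\mathfrak{T}_{C}(G,N)$ (which guarantees that Hall's chain expansion of $\mu(X,G)$ never leaves the poset before reaching $G$) and the exact matching of signs and indices between Hall's chains and the simplices of the nerve. In particular I would check the degenerate case $\mathfrak{T}_{C}(G,N)=\emptyset$ separately, where the empty nerve has $\widetilde{\chi}=-1$ and the corresponding inner sum $\sum_{C\leq X\leq N}\mu(X,G)$ equals $-1$, so the two sides agree consistently.
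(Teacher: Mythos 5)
Your proof is correct, but it takes a genuinely different route from the paper's. The paper proves the second equality by recursively unfolding its own Proposition 4.2: it expands each term $M'_{Y,Y\cap N}$ again and again (Remark 4.3 guaranteeing termination), so that the descending chains $G\gneq Y\gneq Y_1\gneq\cdots$ with all members in $\mathfrak{T}_{C}(G,N)$ produced by the unfolding are exactly the non-degenerate simplices of the nerve, and the alternating signs generated by the iteration assemble into $\chi(|N(\mathfrak{T}_{C}(G,N))|)$. You instead bypass the recursion entirely: you decompose $|X|=\sum_{C\leq X,\,C\ \mathrm{cyclic}}\varphi(|C|)$, interchange sums to isolate the per-$C$ identity $\sum_{C\leq X\leq N}\mu(X,G)=\widetilde{\chi}(|N(\mathfrak{T}_{C}(G,N))|)$, and prove that identity by P.~Hall's chain expansion of $\mu(X,G)$ together with the upward-closure of $\mathfrak{T}_{C}(G,N)$ among proper subgroups, which sets up the clean delete-the-top-vertex bijection between Hall chains and nerve simplices. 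This is the classical Rota--Quillen style argument linking M\"obius functions to reduced Euler characteristics. What your approach buys: it is non-recursive, so it avoids the paper's informal ``$=\cdots=$'' induction step; the sign and dimension bookkeeping happens once, in a single transparent bijection; it makes conceptually clear why the \emph{reduced} Euler characteristic appears (the $-1$ is the deleted top vertex $G$); and you correctly note it needs only $C\lneq G$, consistent with the proposition not assuming $G$ non-cyclic. What the paper's approach buys: it is self-contained modulo its own Proposition 4.2 and Remark 4.3, whereas you must import Hall's theorem (equivalently, the statement that $\mu$ of a poset interval is computed by alternating chain counts), a classical result the paper never invokes --- in effect the paper's recursion re-derives that chain expansion implicitly. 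Your handling of the degenerate case $\mathfrak{T}_{C}(G,N)=\emptyset$, where both sides equal $-1$, is also correct and worth keeping explicit.
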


\begin{proof}By Proposition 4.2
and Remark 4.3, we have
\begin{eqnarray*}
M_{Y, Y\cap N}'
&=&-\sum_{\substack{C\leq Y\cap N\\ C~ is~ cyclic}}\varphi(|C|)-\sum_{\substack{Y_1\lneq Y\\ Y_1\nleq Y\cap N}}M'_{Y_1, Y_1\cap N}\\
&=&-\sum_{\substack{C\leq Y\cap N\\ C~ is~ cyclic}}\varphi(|C|)-\sum_{\substack{Y_1\lneq Y\\ Y_1\nleq N}}M'_{Y_1, Y_1\cap N}.
\end{eqnarray*}
Now, we can repeat the operations of
Proposition 4.2 on $M'_{Y_1, Y_1\cap N}$ by Remark 4.3.
So
\begin{eqnarray*}
&~&\sum_{\substack{Y\lneq G\\ Y\nleq N}}M'_{Y, Y\cap N}\\
&=&\sum_{\substack{Y\lneq G\\ Y\nleq N}}(-\sum_{\substack{C\leq Y\cap N\\ C~is~ cyclic}}\varphi(|C|)-
\sum_{\substack{Y_1\lneq Y\\ Y_1\nleq N}}M'_{Y_1, Y_1\cap N})\\
&=&-\sum_{\substack{Y\lneq G\\ Y\nleq N}}(\sum_{\substack{C\leq Y\cap N\\ C~is~ cyclic}}\varphi(|C|))
-\sum_{\substack{Y\lneq G\\ Y\nleq N}}\sum_{\substack{Y_1\lneq Y\\ Y_1\nleq N}}M'_{Y_1, Y_1\cap N}\\
&=&-\sum_{\substack{Y\lneq G\\Y\nleq N}}(\sum_{\substack{C\leq Y\cap N\\ C~is~ cyclic}}\varphi(|C|))
-\sum_{\substack{Y\lneq G\\Y\nleq N}}\sum_{\substack{Y_1\lneq Y\\Y_1\nleq N}}(-\sum_{\substack{C\leq Y_1\cap N\\C~is~ cyclic}}\varphi(|C|)
-\sum_{\substack{Y_2\lneq Y_1\\Y_2\nleq N}}M'_{Y_2, Y_2\cap N})\\
&=&-\sum_{\substack{Y\lneq G\\Y\nleq N}}(\sum_{\substack{C\leq Y\cap N\\ C~is~ cyclic}}\varphi(|C|))
+\sum_{\substack{Y\lneq G\\Y\nleq N}}\sum_{\substack{Y_1\lneq Y\\Y_1\nleq N}}(\sum_{\substack{C\leq Y_1\cap N\\C~is~ cyclic}}\varphi(|C|))\\
&~&-\sum_{\substack{Y\lneq G\\Y\nleq N}}\sum_{\substack{Y_1\lneq Y\\Y_1\nleq N}}\sum_{\substack{Y_2\lneq Y_1\\Y_2\nleq N}}M'_{Y_2, Y_2\cap N}\\
&~&~\\
&=&\cdots\cdots\cdots\\
&~&~\\
&=&-\sum_{\substack{C\leq N\\ C~is~ cyclic}}\sum_{i}\sum_{\sigma\in N(\mathfrak{T}_{C}(G))_{i}}(-1)^{i}\cdot \varphi(|C|))\\
&=&-\sum_{\substack{C\leq N\\ C~is~ cyclic}}\chi(|N(\mathfrak{T}_{C}(G, N))|)\cdot \varphi(|C|)).
\end{eqnarray*}
Here, $\sigma$ is a $i$-simplex of nerve $N(\mathfrak{T}_{C}(G, N))$ and $\sigma$ is not degenerate.

So, $$
M_{G,N}'=\sum_{\substack{C\leq N\\ C~is~ cyclic}}\widetilde{\chi}(|N(\mathfrak{T}_{C}(G, N))|\cdot \varphi(|C|).
$$
\end{proof}

\begin{proposition}Let $G$ be a finite group and $G$ be not cyclic. Let $N\unlhd G$ such that $|G:N|=p$ for some prime number $p$.
If the space $|N(\mathfrak{T}_{C}(G, N))|$ is contractible for each cyclic subgroup $C$ of $N$, then
$m_{G, N}= 0$.
\end{proposition}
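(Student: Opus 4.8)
The plan is to assemble the machinery of Sections 4 and 5 rather than to build anything new: this proposition is exactly the case in which the reduced Euler characteristics appearing in Proposition 5.3 all vanish.

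First I would invoke the decomposition recorded at the start of Section 4. The key observation is that the hypothesis $|G:N|=p$ forces the quotient $G/N$ to be cyclic of prime order, hence to have no proper nontrivial subgroups; therefore $XN/N$ is either trivial or all of $G/N$, so that for a subgroup $X\leq G$ one has $XN\neq G$ if and only if $X\leq N$. This is the sole place where the prime-index hypothesis is genuinely used, and it is precisely what makes the chain of equalities in Section 4 valid, yielding
$$m_{G,N}+m'_{G,N}=m_{G,G}.$$
Since $G$ is not cyclic, Proposition 2.8 gives $m_{G,G}=0$, so that $m_{G,N}=-m'_{G,N}=-\frac{1}{|G|}M'_{G,N}$.

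Next I would feed this into Proposition 5.3, which already identifies
$$M'_{G,N}=\sum_{\substack{C\leq N\\ C\text{ cyclic}}}\widetilde{\chi}(|N(\mathfrak{T}_{C}(G, N))|)\cdot\varphi(|C|).$$
Here the contractibility hypothesis is decisive. A contractible space is homotopy equivalent to a point, and the Euler characteristic depends only on homology and hence is a homotopy invariant; thus $\chi(|N(\mathfrak{T}_{C}(G, N))|)=1$ and so $\widetilde{\chi}(|N(\mathfrak{T}_{C}(G, N))|)=\chi-1=0$ for every cyclic $C\leq N$. Every summand therefore vanishes, giving $M'_{G,N}=0$, and combining with the previous step yields $m_{G,N}=-\frac{1}{|G|}M'_{G,N}=0$.

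I do not expect a serious obstacle here, since the argument is a bookkeeping assembly of earlier results. The only two points that need to be spelled out carefully are (i) that the prime-index hypothesis is exactly what guarantees $\{X\leq G:XN\neq G\}=\{X\leq N\}$, which keeps $m'_{G,N}$ equal to $\frac{1}{|G|}\sum_{X\leq N}|X|\mu(X,G)$, and (ii) the elementary topological fact that contractibility forces the reduced Euler characteristic to vanish. Both are routine once made explicit.
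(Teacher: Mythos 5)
Your proposal is correct and follows essentially the same route as the paper's own proof: apply Proposition 5.3, use contractibility to kill each reduced Euler characteristic so that $M'_{G,N}=0$, and conclude via the Section 4 identity $m_{G,N}+\frac{1}{|G|}M'_{G,N}=m_{G,G}=0$ for non-cyclic $G$. If anything, you are more explicit than the paper in pinpointing that the prime-index hypothesis is what guarantees $\{X\leq G : XN\neq G\}=\{X : X\leq N\}$, a point the paper leaves implicit when it invokes that identity.
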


\begin{proof}By Proposition 5.3, we have
\begin{eqnarray*}
M_{G,N}'
&=&-\sum_{\substack{C\leq N\\C~ is~ cyclic}}\varphi(|C|)-\sum_{Y\lneq G,Y\nleq N}M'_{Y, Y\cap N}\\
&=&-\sum_{\substack{C\leq N\\C~ is~ cyclic}}(1-\chi(|N(\mathfrak{T}_{C}(G, N))|)\cdot \varphi(|C|)).
\end{eqnarray*}
Since for each cyclic subgroup $C$ of $N$, we have $|N(\mathfrak{T}_{C}(G, N))|$ is contractible.
It implies $\chi(|N(\mathfrak{T}_{C}(G, N)))=1$, thus $M_{G,N}'=0$.

By the definition of $M_{G,N}'$, we know that
$$m_{G,N}+\frac{1}{|G|}M_{G,N}'=m_{G,G}=0.$$
So $m_{G,N}=0$.
\end{proof}

%Now, we post the following problems about $S_n$.
%\begin{conjecture}Let $C$ be a cyclic subgroup of $A_{n}$, then the space $|N(\mathfrak{T}_{C}(S_{n},A_n))|$
%is contractible for $n\geq 5$.
%\end{conjecture}

\section{\bf To compute $m_{G, N}$}

Let $G$ be a finite group and $N\unlhd G$. We will prove Main Theorem in this section.
Recall
$$m_{G, N}=\frac{1}{|G|}\sum_{\substack{XN= G\\ X\leq G}} |X|\mu(X, G).$$
Let $H\lneq G$, and we set
$$m_{G,H}':=\frac{1}{|G|}\sum_{X\leq H} |X|\mu(X, G);$$
and set
$$M_{G,H}':=\sum_{X\leq H} |X|\mu(X, G)=|G|m_{G,H}'.$$

 Let $\{H_1, H_2,\cdots, H_n\}$ be the set of all maximal subgroup
of $G$ such that $N\lneq H_i$. Let $J=\{1,2,\ldots, n\}$ and $\sigma$ be a non-empty subset of $J$.
Set $H_{\sigma}:=\bigcap_{j\in \sigma}H_j$.

\begin{theorem}Let $G$ be a finite group, $G$ not cyclic and $N\unlhd G$.
Then
$$ m_{G,N}=\frac{1}{|G|}\sum_{\substack{C\leq G\\  C~ \mathrm{is~ cyclic}}}\sum_{i=1}^n
\sum_{\substack{\sigma\leq J\\ |\sigma|=i\\  C\leq H_\sigma}}
(-1)^i\widetilde{\chi}(|N(\mathfrak{T}_{C}(G, H_\sigma))|)\cdot \varphi(|C|).$$
Here, $|N(\mathfrak{T}_{C}(G,H_\sigma))|$ is a simplicial complex associated to the poset $\mathfrak{T}_{C}(G,H_\sigma)$, and
$\widetilde{\chi}(|N(\mathfrak{T}_{C}(G,H_\sigma))|)$ is the reduced Euler characteristic of the space $|N(\mathfrak{T}_{C}(G, H_\sigma))|$.
\end{theorem}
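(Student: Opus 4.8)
The plan is to reduce the computation of $m_{G,N}$ to the quantities $M'_{G,H_\sigma}$, which are already controlled by Proposition 5.3, by means of an inclusion--exclusion over the maximal subgroups $H_1,\dots,H_n$. The first step is the set-theoretic observation that, for $X\leq G$, one has $XN\neq G$ if and only if $X\leq H_i$ for some $i\in J$. Indeed, if $X\leq H_i$ then $XN\leq H_iN=H_i\lneq G$; conversely, if $XN\neq G$ then $XN$ is a proper subgroup containing $N$, hence is contained in a maximal subgroup $H$ with $N\leq H\lneq G$, and one may take $H=H_i$ for some $i$, giving $X\leq H_i$. Consequently
$$\{X\leq G\mid XN\neq G\}=\bigcup_{i=1}^n\{X\leq G\mid X\leq H_i\},$$
and since $\bigcap_{i\in\sigma}\{X\mid X\leq H_i\}=\{X\mid X\leq H_\sigma\}$, applying the inclusion--exclusion principle to the function $X\mapsto |X|\mu(X,G)$ yields
$$\sum_{\substack{X\leq G\\ XN\neq G}}|X|\mu(X,G)=\sum_{i=1}^n\sum_{\substack{\sigma\subseteq J\\ |\sigma|=i}}(-1)^{i-1}\sum_{X\leq H_\sigma}|X|\mu(X,G)=\sum_{i=1}^n\sum_{\substack{\sigma\subseteq J\\ |\sigma|=i}}(-1)^{i-1}M'_{G,H_\sigma}.$$

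Next I would feed this into the identity coming from $m_{G,G}=0$. Because $G$ is not cyclic, Proposition 2.8 gives $\sum_{X\leq G}|X|\mu(X,G)=|G|\,m_{G,G}=0$, and splitting this sum according to whether $XN=G$ or $XN\neq G$ produces
$$|G|\,m_{G,N}=\sum_{\substack{X\leq G\\ XN=G}}|X|\mu(X,G)=-\sum_{\substack{X\leq G\\ XN\neq G}}|X|\mu(X,G)=\sum_{i=1}^n\sum_{\substack{\sigma\subseteq J\\ |\sigma|=i}}(-1)^{i}M'_{G,H_\sigma}.$$
Since every nonempty $\sigma$ satisfies $H_\sigma\leq H_i\lneq G$, Proposition 5.3 applies verbatim with $H_\sigma$ in the role of $N$, giving
$$M'_{G,H_\sigma}=\sum_{\substack{C\leq H_\sigma\\ C~\mathrm{is~cyclic}}}\widetilde{\chi}(|N(\mathfrak{T}_{C}(G,H_\sigma))|)\cdot\varphi(|C|).$$

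Finally I would substitute this expression and interchange the order of summation so that the outermost sum runs over cyclic subgroups $C\leq G$, the constraint $C\leq H_\sigma$ being recorded inside the triple sum; dividing by $|G|$ then produces exactly the asserted formula. The step I expect to be the main obstacle is the precise justification of the covering $\{X\mid XN\neq G\}=\bigcup_i\{X\mid X\leq H_i\}$ together with the correct bookkeeping of signs in the inclusion--exclusion: in particular one must verify that every $X\leq N$ is still captured by some $H_i$, which relies on the existence of a maximal subgroup strictly containing $N$, so the degenerate situation where $N$ is itself maximal has to be treated separately (there $J=\emptyset$ and the right-hand side is an empty sum). Once the covering and the signs are settled, the remainder is a direct application of Proposition 5.3 and a routine reindexing of the triple sum.
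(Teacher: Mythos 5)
Your proposal is correct and is essentially the paper's own proof: both split $0=|G|\,m_{G,G}$ (valid since $G$ is not cyclic) according to whether $XN=G$, cover $\{X\le G \mid XN\neq G\}$ by the maximal subgroups $H_i$ containing $N$, apply inclusion--exclusion to write that sum as $\sum_{\sigma}(-1)^{|\sigma|-1}M'_{G,H_\sigma}$, and then substitute Proposition 5.3 for each $M'_{G,H_\sigma}$ and reindex over cyclic $C$. The degenerate case you flag (where $N$ is itself maximal, so the strict condition $N\lneq H_i$ gives $J=\emptyset$) is a genuine subtlety that the paper glosses over as well: its proof silently uses the non-strict condition $N\le H_i$ while the theorem statement says $N\lneq H_i$, and the two readings differ only in that case.
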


\begin{proof}Since $G$ is not cyclic, we have
\begin{eqnarray*}
0
&=&\frac{1}{|G|}\sum_{X\leq G} |X|\mu(X, G)\\
&=&\frac{1}{|G|}\sum_{\substack{XN= G\\ X\leq G}} |X|\mu(X, G)+\frac{1}{|G|}\sum_{\substack{XN\neq G\\ X\leq G}} |X|\mu(X, G).
\end{eqnarray*}
If $X\leq G$ and $XN\neq G$, thus there exists a maximal subgroup $H$ of $G$ such that
$$X\leq XN \leq H\lneq G.$$
Let $\{H_1, H_2,\cdots, H_n\}$ be the set of all maximal subgroup
of $G$ such that $N\leq H_i$. It implies that
$$\sum_{\substack{XN\neq G\\ X\leq G}} |X|\mu(X, G)=
\sum_{\substack{X\leq H_i\\ for ~some~ i} }|X|\mu(X,G).$$
Now, we focus on $$\sum_{\substack{X\leq H_i\\ for ~some~ i}}|X|\mu(X,G).$$
 By the inclusion-exclusion principle, we can see
\begin{eqnarray*}
&~&\sum_{\substack{X\leq H_i\\ for ~some~ i} }\mu(X,G)\\
&=&\sum_{i=1}^n M'_{G, H_i}-\sum_{1\leq i\lneq j\leq n}M'_{G, H_i\cap H_j}\\
&~&+\sum_{1\leq i\lneq j\lneq k\leq n}M'_{G, H_i\cap H_j\cap H_k}+\cdots+(-1)^{n+1}\cdot M'_{G, \bigcap_{i=1}^{n}H_i}\\
&=&\sum_{i=1}^n\sum_{\substack{C\leq H_i\\ C~ \mathrm{is~ cyclic}}}\widetilde{\chi}(|N(\mathfrak{T}_{C}(G, H_i))|)\cdot \varphi(|C|)+\cdots+\\
&~&(-1)^{n+1}\sum_{\substack{C\leq \bigcap_{i=1}^{n}H_i\\ C~ \mathrm{is~ cyclic}}}\widetilde{\chi}(|N(\mathfrak{T}_{C}(G, \bigcap_{i=1}^{n}H_i))|)\cdot \varphi(|C|)\\
&=&-\sum_{\substack{C\leq G\\  C~ \mathrm{is~ cyclic}}}\sum_{i=1}^n
\sum_{\substack{\sigma\leq J\\ |\sigma|=i\\  C\leq H_\sigma}}
(-1)^i\widetilde{\chi}(|N(\mathfrak{T}_{C}(G, H_\sigma))|)\cdot \varphi(|C|).
\end{eqnarray*}
Hence, we have $$ m_{G,N}=\frac{1}{|G|}\sum_{\substack{C\leq G\\  C~ \mathrm{is~ cyclic}}}\sum_{i=1}^n
\sum_{\substack{\sigma\leq J\\ |\sigma|=i\\  C\leq H_\sigma}}
(-1)^i\widetilde{\chi}(|N(\mathfrak{T}_{C}(G, H_\sigma))|)\cdot \varphi(|C|).$$
\end{proof}

\begin{remark} Since
$$m_{G, N}=\frac{1}{|G|}\sum_{\substack{XN= G\\ X\leq G}} |X|\mu(X, G)=-\frac{1}{|G|}
\sum_{\substack{X\leq H\\ for ~some\\ N\leq  H\lneq G} }|X|\mu(X,G),$$
thus we can see that $m_{G,N}$ depends on $H$ with $N\leq  H\lneq G$.
So it may be a reason why there exists a relation between $G$ and $\beta(G)$.
\end{remark}

\textbf{ACKNOWLEDGMENTS}\hfil\break
The authors would like to thank Prof. S. Bouc for his numerous discussion in Beijing in Oct. 2014.
And the second author would like to thank  Prof. C. Broto for his constant encouragement in Barcelona in Spain.
The authors would like to thank the reviewer of \cite{LXZ}, and the proof of Lemma 3.1 and
 the formula $(\ast)$ of  Proposition 4.1 are due to the reviewer.

\end{document}